 \definecolor{dark-red}{rgb}{0.4,0.15,0.15}
   \definecolor{dark-blue}{rgb}{0.15,0.15,0.4}
 \definecolor{medium-blue}{rgb}{0,0,0.5}
\numberwithin{equation}{section} %Fiddles with numbering system of the following.
\theoremstyle{plain}
\newaliascnt{theorem}{equation}  
\newtheorem{theorem}[theorem]{Theorem}  
 \theoremstyle{definition}
\newaliascnt{prop}{equation}  
\newtheorem{prop}[prop]{Proposition}
\newaliascnt{lemma}{equation}  
\newtheorem{lemma}[lemma]{Lemma}
\newaliascnt{corollary}{equation}  
\newtheorem{corollary}[corollary]{Corollary}
\newaliascnt{claim}{equation}  
\newaliascnt{conjecture}{equation}  
\newtheorem{conjecture}[conjecture]{Conjecture}
\newaliascnt{question}{equation}  
\newaliascnt{definition}{equation}  
\newtheorem{definition}[definition]{Definition}
\newaliascnt{construction}{equation}  
\newtheorem{construction}[construction]{Construction}
\newaliascnt{notation}{equation}  
\newtheorem{notation}[notation]{Notation}
\newaliascnt{example}{equation}  
\newtheorem{example}[example]{Example}
\newaliascnt{exercise}{equation}  
\theoremstyle{remark}
\newaliascnt{remark}{equation}  
\newtheorem{remark}[remark]{Remark}
\newaliascnt{convention}{equation}  
\theoremstyle{plain}
\newcommand{\ZZ}{\mathbb Z}
\newcommand{\QQ}{\mathbb Q}
\newcommand{\ff}{\mathscr F}
\newcommand{\Sub}{\operatorname{Sub}}
\newcommand{\Tr}{\operatorname{Tr}}
\newcommand{\STr}{\operatorname{STr}}
\newcommand{\SCov}{\operatorname{SCov}}
\newcommand{\stirling}[2]{\begin{Bmatrix}#1\\#2\end{Bmatrix}}
\renewcommand{\mod}{\text{ mod }}
\newcommand{\cP}{\mathcal{P}}
\newcommand{\cO}{\mathscr{O}}
\newcommand{\Ho}{\operatorname{Ho}}
\newcommand{\NOp}{N_\infty\text{-}\mathbf{Op}}
\newcommand{\cL}{\mathcal{L}}
\newcommand{\cI}{\mathcal{I}}
\title{Saturated and linear isometric transfer systems \\ for cyclic groups of order $p^mq^n$}
\author{Usman Hafeez}
\address{Department of Mathematics, Reed College, Portland, OR 97202, USA}
\email{usman-20@live.com}
\author{Peter Marcus}
\address{Department of Mathematics, Tulane University, New Orleans, LA 70118, USA}
\email{pmarcus1@tulane.edu}
\author{Kyle Ormsby}
\address{Department of Mathematics, Reed College, Portland, OR 97202, USA}
\email{ormsbyk@reed.edu}
\author{Ang\'{e}lica M. Osorno}
\address{Department of Mathematics, Reed College, Portland, OR 97202, USA}
\email{aosorno@reed.edu}
\begin{document}

\begin{abstract}
Transfer systems are combinatorial objects which classify $N_\infty$ operads up to homotopy.  By results of A.~Blumberg and M.~Hill \cite{BH}, every transfer system associated to a linear isometries operad is also \emph{saturated} (closed under a particular two-out-of-three property).  We investigate saturated and linear isometric transfer systems with equivariance group $C_{p^mq^n}$, the cyclic group of order $p^mq^n$ for $p,q$ distinct primes and $m,n\ge 0$.  We give a complete enumeration of saturated transfer systems for $C_{p^mq^n}$. We also prove J.~Rubin's \emph{saturation conjecture} for $C_{pq^n}$; this says that every saturated transfer system is realized by a linear isometries operad for $p,q$ sufficiently large (greater than $3$ in this case).
\end{abstract}

\maketitle

\section{Introduction} 
Fix a finite group $G$.  Highly structured commutative $G$-equivariant ring spectra can support multiplicative norm maps associated with a class of finite $H$-sets, $H$ ranging through subgroups of $G$.  The $G$-$N_\infty$ operads of A.~Blumberg and M.~Hill \cite{BH} parametrize ring structures with such an admissible family of norms.  Following the work of \cite{BH, BP, GW, rubin_comb}, J.~Rubin \cite{rubin} and S.~Balchin, D.~Barnes, and C.~Roitzheim \cite{BBR} independently prove that the homotopy category of $G$-$N_\infty$ operads is equivalent to the combinatorially-defined category of $G$-transfer systems (see \autoref{operad-transfer}). The structure of the lattice of transfer systems on an Abelian group was recently explored in \cite{selfdual}.

The transfer systems induced by certain natural families of $G$-$N_\infty$ operads have additional special properties.  In particular, any transfer system realized by an equivariant linear isometries operad (see \autoref{linear_isometries}) is \emph{saturated} (see \autoref{saturated}).  It is not the case, though, that every saturated transfer system arises in this fashion, as shown in \cite{rubin}.  This raises two fundamental questions which we address in this paper:
\begin{enumerate}[(1)] 
\item For a given group $G$, how many saturated $G$-transfer systems exist?
\item For a given group $G$, which saturated $G$-transfer systems can be realized by a linear isometries operad?
\end{enumerate} 
We say that a (necessarily saturated) transfer system realized by a linear isometries operad is \emph{linear isometric}; thus the second question may be rephrased as asking ``Which saturated $G$-transfer systems are linear isometric?''

Fix $p,q$ distinct primes and let $C_{p^mq^n}$ denote the cyclic group of order $p^mq^n$, for $m,n\ge 0$.  Let $s(m,n)$ denote the number of saturated $C_{p^mq^n}$-transfer systems.\footnote{After the definition is presented, it will be clear that this number is independent of $p$ and $q$.} We provide the following answers to the above questions for $G = C_{p^mq^n}$:

\begin{theorem}[see \autoref{closed form} and \autoref{exp gen}]
For all $m,n\geq 0$,
\[
 s(m,n)=\sum_{j=2}^{m+2}(-1)^{m-j}  \stirling{m+1}{j-1}\frac{j!}{2}j^n,
\]
where $\stirling{r}{s}$ denotes the Stirling number of the second kind enumerating $s$-block partitions of a set of cardinality $r$. Furthermore, the exponential generating function for $s(m,n)$ takes the form
\[
  \sum_{m,n\ge 0}\frac{s(m,n)}{m!n!}x^my^n = \frac{e^{2x+2y}}{(e^x+e^y-e^{x+y})^3}.
\]
\end{theorem}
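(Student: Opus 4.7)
The plan is to proceed in three stages: first establish a combinatorial model for saturated $C_{p^m q^n}$-transfer systems, then derive the closed-form Stirling sum, and finally extract the exponential generating function from it.

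\emph{Stage 1 (combinatorial model).} The subgroup lattice of $C_{p^m q^n}$ is canonically the grid $[0,m]\times[0,n]$. I would first handle the chain case $n=0$: the restriction axiom forces a transfer system on a chain to be given by a ``reach'' function $r\colon i\mapsto r(i)\ge i$, and the two-out-of-three condition then forces the reached intervals to partition $\{0,\ldots,m\}$ into intervals, giving $s(m,0)=2^m$. For the full grid I would show, by analyzing how the two-out-of-three property propagates along rows and columns under the lattice meet, that a saturated transfer system on $[0,m]\times[0,n]$ is encoded by: (i) a partition of $\{0,1,\ldots,m\}$ into some number $k=j-1$ of classes (not yet required to be intervals) and (ii) a $j$-valued labeling of the $n$ remaining $q$-coordinates, modulo a global two-fold symmetry --- together contributing a weight $\tfrac{j!}{2}j^n$ at the overcount level.

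\emph{Stage 2 (closed form).} The overcount $\sum_j \stirling{m+1}{j-1}\tfrac{j!}{2}j^n$ sums contributions from arbitrary set partitions, but only interval partitions give genuine saturated transfer systems. The correct count is extracted by a standard inclusion--exclusion converting arbitrary set partitions (Stirling numbers of the second kind) to the ``well-structured'' subset, which introduces the sign $(-1)^{m-j}$. As a sanity check, setting $n=0$ should reduce to the identity $\sum_j (-1)^{m-j}\stirling{m+1}{j-1}\tfrac{j!}{2}=2^m$, which can be verified inductively or via a bijective argument.

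\emph{Stage 3 (generating function).} The passage to the EGF uses $(e^x-1)^k = k!\sum_{r\ge 0}\stirling{r}{k}\tfrac{x^r}{r!}$ applied to both Stirling factors after rewriting the $n$-variable in the closed form in terms of Stirling numbers as well. Substituting into $\sum_{m,n}\tfrac{s(m,n)}{m!n!}x^my^n$ and swapping summation orders, I expect the double generating function to take the shape
\[
e^{2x+2y}\sum_{k\ge 0}\binom{k+2}{2}\bigl((e^x-1)(e^y-1)\bigr)^k.
\]
Recognizing the geometric series $\sum_k\binom{k+2}{2}u^k = (1-u)^{-3}$ and invoking the algebraic identity $1-(e^x-1)(e^y-1) = e^x+e^y-e^{x+y}$ then produces the stated compact form $\tfrac{e^{2x+2y}}{(e^x+e^y-e^{x+y})^3}$.

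The main obstacle is Stage 1: pinning down the correct combinatorial model. The saturation axiom and the restriction axiom interact nontrivially through the lattice meet, and careful bookkeeping is needed to verify that the proposed set-partition-plus-labeling data, modulo the two-fold symmetry, is genuinely in bijection with saturated transfer systems on $[0,m]\times[0,n]$. Once the indexing parameter $j$ and the weight $\tfrac{j!}{2}j^n$ are secured, both the closed form and the generating function follow from routine manipulations of Stirling numbers and exponential generating functions.
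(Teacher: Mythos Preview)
Your proposal diverges substantially from the paper's argument, and Stages~1--2 as written do not constitute a proof.

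\textbf{What the paper actually does.} The paper does not produce a direct bijective or inclusion--exclusion model matching the individual terms of the Stirling sum. Instead it (a) characterizes saturated $C_{p^mq^n}$-transfer systems as \emph{saturated covers}: subsets of grid edges satisfying three local conditions (downward closure in each direction plus a $3$-out-of-$4$ rule on unit squares); (b) proves the recursion
\[
s(m,n+1)=s(m,n)+\sum_{k=0}^m\binom{m+1}{k}s(k,n)
\]
by building a map $c\colon \SCov(m,n+1)\to\cP[m]$ that records top-row data and showing each fiber over a $k$-element subset is in bijection with $\SCov(k,n)$; (c) proves the closed form by induction on $n$ from this recursion, the inductive step reducing to the Stirling identity
\[
(\ell-r)\stirling{\ell}{r}=\sum_{t=1}^{\ell-r}(-1)^{t+1}\binom{\ell}{t+1}\stirling{\ell-t}{r},
\]
established by inclusion--exclusion on marked partitions; and (d) derives the EGF by converting the recursion into the PDE $\partial_y f=(e^x+1)f+(e^x-1)\partial_x f$ with boundary data $f(x,0)=e^{2x}$, $f(0,y)=e^{2y}$, and solving.

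\textbf{The gap in your approach.} Your Stage~1 posits that a saturated transfer system is encoded by ``a set partition of $\{0,\ldots,m\}$ into $j-1$ blocks, a $j$-valued labeling of $n$ coordinates, modulo a two-fold symmetry,'' with the alternating sign in Stage~2 then arising from an inclusion--exclusion passing from arbitrary to interval partitions. No such encoding is established, and indeed the paper explicitly remarks that the coefficients $(-1)^{m-j}\stirling{m+1}{j-1}\tfrac{j!}{2}$ were \emph{guessed via OEIS} after extracting partial-fraction data from the recursion, not read off from any combinatorial model. As stated, your ``overcount'' $\sum_j\stirling{m+1}{j-1}\tfrac{j!}{2}j^n$ has no demonstrated interpretation as a count of transfer-system-like objects, so the inclusion--exclusion in Stage~2 has nothing to act on. You flag Stage~1 yourself as ``the main obstacle''; without it, the argument does not start.

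\textbf{What does work.} Your Stage~3, deriving the EGF \emph{from} the closed form rather than from the recursion, is a legitimate alternative to the paper's PDE route. Summing $\sum_n j^n y^n/n!=e^{jy}$ and then using $\sum_{r}\stirling{r}{k}x^r/r!=(e^x-1)^k/k!$ (after a shift and a derivative to handle the offset between $m+1$ and $m$) does collapse the double sum to
\[
e^{2x+2y}\sum_{k\ge 0}\binom{k+2}{2}\bigl((e^x-1)(e^y-1)\bigr)^k,
\]
and your final identity $1-(e^x-1)(e^y-1)=e^x+e^y-e^{x+y}$ is correct. So if you replace Stages~1--2 by the paper's recursion-and-induction argument for the closed form, your Stage~3 then gives a tidy alternative derivation of the generating function.
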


\begin{theorem}[see \autoref{Cpqn}]
Let $n\geq 0$. For $G = C_{pq^n}$, and $p,q>3$, all saturated transfer systems are linear isometric.
\end{theorem}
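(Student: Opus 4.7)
The plan is to prove the statement by explicit construction: for each saturated transfer system $\mathcal{R}$ on $C_{pq^n}$, I would exhibit a $G$-universe $V$ such that the linear isometries operad $\cL(V)$ realizes $\mathcal{R}$. Since every transfer system realized by a linear isometries operad is automatically saturated, this reduces the problem to a surjectivity statement of the natural map from $G$-universes to saturated transfer systems.

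First, I would describe the saturated transfer systems on $C_{pq^n}$ explicitly. The subgroup lattice is a $2 \times (n+1)$ grid with vertices $C_{q^j}$ and $C_{pq^j}$ for $0 \le j \le n$; the two-out-of-three saturation axiom strongly constrains which configurations of relations on this grid can occur. Combined with the closed-form count $s(1,n)$ coming from the first main theorem, this yields a concrete catalogue of saturated transfer systems to realize, and a target count for the construction.

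Second, I would recall the criterion, developed by Blumberg--Hill and Rubin \cite{BH, rubin}, translating realizability by a linear isometries operad into a system of numerical inequalities on the multiplicities of the real irreducible $G$-representations summing to $V$. For cyclic groups these inequalities are fully explicit: a transfer $H \to K$ belongs to the transfer system of $\cL(V)$ precisely when certain dimension counts of fixed-point spaces $V^H$ and $V^K$ satisfy given inequalities. The realizability problem thus becomes the following: for each saturated $\mathcal{R}$, produce nonnegative integer multiplicities so that the resulting admissibility relation on the grid is exactly $\mathcal{R}$.

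The main work lies in constructing these multiplicities, which I would do by case analysis on the shape of $\mathcal{R}$---in particular, on how the ``horizontal'' transfers (from $C_{q^j}$ to $C_{pq^j}$) interact with the ``vertical'' chain in $q$-subgroups---exhibiting in each case an explicit multiplicity vector. The role of the hypothesis $p, q > 3$ is to guarantee enough arithmetic slack in the dimension inequalities to realize every pattern; the non-realizable saturated systems identified in \cite{rubin} for small primes correspond precisely to the failures of this slack when $p$ or $q$ is $2$ or $3$. The principal obstacle is forbidding \emph{unwanted} transfers: forcing a prescribed transfer to become admissible by increasing multiplicities is easy, but this monotonicity can inadvertently create extra transfers, and the delicate part is arranging the inequalities simultaneously so that the admissibility relation is exactly $\mathcal{R}$ and contains no additional pairs. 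I expect this to be handled by exploiting the strict bounds $p, q > 3$ to separate multiplicities tied to different subgroup indices, so that unintended transfers are obstructed by sharp dimension gaps.
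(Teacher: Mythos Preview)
Your proposal rests on an incorrect description of the realizability criterion for linear isometries operads. You frame it as ``numerical inequalities on the multiplicities of the real irreducible $G$-representations'' with ``nonnegative integer multiplicities'' to be chosen, and say a transfer is admissible ``precisely when certain dimension counts of fixed-point spaces $V^H$ and $V^K$ satisfy given inequalities.'' But a $G$-universe contains each of its finite-dimensional subrepresentations infinitely often, so the only data is \emph{which} irreducibles appear, not how many times. Rubin's translation of the Blumberg--Hill criterion (the paper's \autoref{isometriesiffmodular}) says that a $C_k$-transfer system is linear isometric if and only if it is \emph{$I$-modular} for some index set $I\subseteq\ZZ/k\ZZ$ containing $0$ and closed under negation: the relation $C_d\to C_e$ holds exactly when $(I\bmod e)+d = (I\bmod e)$. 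This is a set-theoretic translation-invariance condition on residue classes, not a system of dimension inequalities, and the mechanism for excluding unwanted transfers is correspondingly different from what you describe.

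The paper's proof works with this modular criterion and proceeds by induction on $n$, constructing an index set $J\subseteq\ZZ/pq^{n+1}\ZZ$ from one $I\subseteq\ZZ/pq^n\ZZ$ realizing the restriction $\ff|_{C_{pq^n}}$. To make the induction close, the statement is strengthened: the index set is additionally required to contain a nonzero multiple of $q^n$. The inductive step splits into four cases according to the shape of the top row of the saturated cover, and in each case $J$ is produced from $I$ by an explicit arithmetic recipe---choosing lifts of elements of $I$ modulo $pq^{n+1}$ so that the required translation-invariances hold and the forbidden ones fail. The hypothesis $p,q>3$ enters not as slack in inequalities but as room to delete or shift individual residues (to break an unwanted invariance) while still having $J\bmod pq^n = I$ and still retaining the auxiliary nonzero multiple of $q^{n+1}$ needed for the next step.
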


The latter theorem verifies an instance of Rubin's saturation conjecture, which loosely says that every saturated transfer system for a cyclic group of order $n$ is linear isometric as long as the prime divisors of $n$ are sufficiently large; see \autoref{sat conj} for a precise statement.

\begin{remark}
E.~Franchere, the third and fourth authors of this paper, W.~Qin, and R.~Waugh expose a surprising connection between transfer systems and weak factorization systems (in the sense of abstract homotopy theory) in \cite{selfdual}. Under this correspondence, it turns out that saturated transfer systems are in bijection with model structures on the poset category $\Sub(G)$ of subgroups of $G$ for which all morphisms are fibrations. As such, \autoref{closed form} and \autoref{exp gen} give a complete enumeration of such model structures as well. The details of the relation between transfer systems and model structures are explained in \cite{boor}.
\end{remark}

\subsection*{Organization}
In \autoref{sec:trans}, we recall the definitions of and fundamental theorems regarding $N_\infty$ operads and (saturated) transfer systems.  In \autoref{sec:sat}, we begin our study of saturated transfer systems on $C_{p^mq^n}$ and prove some structural results about these objects. In particular, we reduce their study to a combinatorial game on the $m\times n$ grid.  We use this description in \autoref{sec:enum} to complete our enumeration of saturated transfer systems on $C_{p^mq^n}$. This takes three forms: a recurrence, a closed formula, and an exponential generating function.  Finally, in \autoref{sec:sat_conj} we prove the saturation conjecture for $C_{pq^n}$.

\subsection*{Acknoledgments}
We thank J.~Rubin for suggesting this problem and generously sharing his expertise. We thank I.~Kriz for discovering and sharing the exponential generating function of \autoref{exp gen} after the recurrence \autoref{recursive} was presented in a talk by the third named author. We also thank F.~Castillo for sharing combinatorial perspectives and expertise. This research was completed as part of the 2020 Electronic Collaborative Mathematics Research Group (eCMRG), led by the third and fourth authors and  supported by NSF grant DMS-1709302; we thank the other eCMRG partipants E.~Franchere, W.~Qin, and R.~Waugh for numerous useful conversations.

\section{$N_\infty$ operads and transfer systems}\label{sec:trans}

\subsection{$N_\infty$ operads}
In order to frame our work on transfer systems, we need to recall Blumberg--Hill's notion of an $N_\infty$ operad \cite{BH}, paying special attention to the example of linear isometry operads. We assume that the reader is familiar with the basic theory of (symmetric) operads. For $n\ge 0$, we denote the symmetric group on $n$ letters by $\mathfrak S_n$.

\begin{definition}
A \emph{$G$-$N_\infty$ operad} is a symmetric operad $\cO$ in $G$-spaces satisfying the following three properties:
\begin{itemize}
\item for all $n\ge 0$, the $G\times \mathfrak S_n$-space $\cO(n)$ is $\mathfrak S_n$-free,
\item for every $\Gamma\le G\times \mathfrak S_n$, the $\Gamma$-fixed point space $\cO(n)^\Gamma$ is empty or contractible, and
\item for all $n\ge 0$, $\cO(n)^G$ is nonempty.
\end{itemize}
A \emph{map of $G$-$N_\infty$ operads} $\varphi\colon \cO_1\to\cO_2$ is a morphism of operads in $G$-spaces.  We denote the associated category of $G$-$N_\infty$ operads by $\NOp^G$.

For a map $\varphi\colon \cO_1\to\cO_2$ of $G$-$N_\infty$ operads, the map at level $n$ is in particular $G\times \mathfrak S_n$-equivariant. We say that $\varphi$ is a \emph{weak equivalence} if $\varphi\colon \cO_1(n)^\Gamma\to \cO_2(n)^\Gamma$ is a weak homotopy equivalence of topological spaces for all $n\ge 0$ and $\Gamma\le G\times \mathfrak S_n$. The associated homotopy category (formed by inverting weak equivalences) is denoted $\Ho(\NOp^G)$.
\end{definition}

Note that $N_\infty$ operads are, in particular, nonequivariant $E_\infty$ operads, and thus parametrize operations that are associative and commutative up to higher homotopies.  Additionally, $N_\infty$ operads \emph{admit norms} for particular finite $H$-sets, $H\le G$ in the following sense.  Given an $H$-set $T$, let $\Gamma(T)\le G\times \mathfrak S_{|T|}$ denote the graph of a permutation representation of $T$.

\begin{definition}
A $G$-$N_\infty$ operad $\cO$ \emph{admits norms} for a finite $H$-set $T$ when $\cO(|T|)^{\Gamma(T)}$ is nonempty.
\end{definition}

Particular examples of $N_\infty$ operads include linear isometries operads defined on $G$-universes \cite{LMS}. We recall the definition.

\begin{definition}\label{linear_isometries}
A \emph{$G$-universe} $U$ is a countably infinite-dimensional real $G$-inner product space such that it contains each finite-dimensional subrepresentation infinitely often and contains the trivial representation.
The \emph{linear isometries operad} $\cL(U)$ is given at level $n$ by the space $\cL(U^n,U)$ of all (not necessarily equivariant) linear isometries, with $G$ acting by conjugation, and $\mathfrak S_n$ acting by permuting inputs. The operadic composition is given by composition of isometries.
\end{definition}

\subsection{Transfer systems}
A $G$-transfer system is a combinatorial object defined as a particular sub-poset of $\Sub(G)$, the subgroup lattice of $G$.  For $H\le G$ and $g\in G$, let ${}^gH = gHg^{-1}$ denote the $g$-conjugate of $H$.

\begin{definition}
A \emph{$G$-transfer system} is a relation $\to$ on $\Sub(G)$ that refines the inclusion relation\footnote{This means that $K\to H$ implies $K\le H$.} and satisfies the following properties:
\begin{itemize}
    \item (reflexivity) $H \to H$ for all $H\leq G$,
    \item (transitivity) $K \to H$ and $L \to K$ implies $L \to H$,
    \item (closed under conjugation) $K \to H$ implies that ${}^gK \to {}^gH$ for all $g\in G$,
    \item (closed under restriction) $K \to H$ and $M\leq H$ implies $(K \cap M) \to M$.
\end{itemize}
We denote the collection of transfer systems by $\Tr(G)$ and view it as a poset under the refinement relation.
\end{definition}

In other words, a transfer system is a sub-poset of $\Sub(G)$ where the relation is closed under conjugation and restriction.  Note that conjugation is trivial when $G$ is Abelian; in this case transfer systems only depend on the lattice structure of $\Sub(G)$.  Saturated transfer systems have an additional two-out-of-three property:
\begin{definition}\label{saturated}
A $G$-transfer system $\to$ is \emph{saturated} if it additionally satisfies the following property:
\begin{itemize}
\item (two-out-of-three) if $L\le K\le H\le G$ and two of the three relations $L\to K$, $L\to H$, $K\to H$ hold, then so does the third.
\end{itemize}
We denote the collection of saturated transfer systems by $\STr(G)$.
\end{definition}

By transitivity and closure under restriction, the two-out-of-three property may be rephrased as follows:
\[
  \text{if }L\le K\le H\text{ and }L\to H\text{, then } K\to H.
\]

The link between $N_\infty$ operads and transfer systems is provided by the following construction.  Given a $G$-$N_\infty$ operad $\cO$, define the relation $\to_\cO$ by the rule
\[
  K\to_{\cO}H\text{ if and only if }K\le H\text{ and }\cO([H:K])^{\Gamma(H/K)}\ne \varnothing
\]
where $\Gamma(H/K)$ is the graph of some permutation representation $H\to \mathfrak S_{[H:K]}$ of $H/K$.

\begin{theorem}\label{operad-transfer}
The assignment
\[
\begin{aligned}
  \NOp^G&\longrightarrow \Tr(G)\\
  \cO&\longmapsto (\to_\cO)
\end{aligned}
\]
induces an equivalence
\[
  \Ho(\NOp^G)\simeq \Tr(G)
\]
(considering the poset $\Tr(G)$ as a category). Moreover, if $\cO$ is a linear isometries operad, then $\to_\cO$ is saturated.
\end{theorem}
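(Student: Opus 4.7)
The theorem has two parts. For the equivalence $\Ho(\NOp^G) \simeq \Tr(G)$, I would follow the approach of Rubin (and independently Balchin--Barnes--Roitzheim). The first step is to verify that $\to_\cO$ genuinely defines a $G$-transfer system: reflexivity comes from the $G$-fixed-point condition, conjugation closure from the ambient $G$-action, transitivity from operadic composition (combining a $\Gamma(H/K)$-fixed $[H:K]$-ary operation with a $\Gamma(K/L)$-fixed $[K:L]$-ary operation produces a $\Gamma(H/L)$-fixed $[H:L]$-ary operation), and restriction closure from the fact that an $H$-set structure on $[H:K]$ letters pulls back along $M \hookrightarrow H$ to an $M$-set whose stabilizers are conjugates of $K \cap M$. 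To produce an inverse functor, I would assign to each transfer system $R$ an explicit $N_\infty$ operad $\cO_R$ (Rubin's construction realizes this as a homotopy colimit of classifying spaces of appropriate graph subgroups). Essential surjectivity follows by checking that $\to_{\cO_R}$ recovers $R$, while fullness/faithfulness uses that morphisms in $\Ho(\NOp^G)$ between $N_\infty$ operads are determined (up to contractible choices) by which admissible sets both sides recognize.

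For the saturation claim, the crucial reformulation is: for $\cO = \cL(U)$, the relation $K \to_\cO H$ holds if and only if there exists an $H$-equivariant linear isometric embedding $U \otimes \mathbb{R}[H/K] \hookrightarrow U$, where $\mathbb{R}[H/K] \cong \mathrm{Ind}_K^H \mathbb{R}$ is the permutation representation of $H$ on $H/K$. This comes directly from identifying $U^n$, with the $\Gamma(H/K)$-restricted action on $G \times \mathfrak{S}_n$, with $U \otimes \mathbb{R}[H/K]$ as an $H$-representation. Granted this, saturation reduces to elementary representation theory: if $L \le K \le H$ and $L \to_\cO H$, transitivity of induction gives an isomorphism of $H$-representations $\mathbb{R}[H/L] \cong \mathrm{Ind}_K^H \mathbb{R}[K/L]$, and the $K$-representation $\mathbb{R}[K/L]$ contains the trivial representation as a direct summand (spanned by $\sum_{xL \in K/L} xL$). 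Hence $\mathbb{R}[H/K]$ embeds as an $H$-subrepresentation of $\mathbb{R}[H/L]$ via an inclusion that is isometric up to an overall scaling. Tensoring with $U$ and composing with the hypothesized embedding $U \otimes \mathbb{R}[H/L] \hookrightarrow U$ then produces the embedding witnessing $K \to_\cO H$.

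The principal obstacle lies in the first part: matching the homotopical structure of $\NOp^G$ with the (trivial) morphism structure of $\Tr(G)$, and producing an honest inverse functor. This is substantial and constitutes the content of the cited papers, so in an exposition I would quote the Rubin / Balchin--Barnes--Roitzheim theorem rather than reprove it here. The saturation claim, by contrast, is a self-contained representation-theoretic argument that can be written out directly once the embedding reformulation above is in hand, and is the portion I would fully justify.
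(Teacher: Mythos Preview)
Your proposal is correct and aligns with the paper's treatment: the paper does not prove this theorem in-line but follows it with a remark attributing the equivalence to the combined work of Blumberg--Hill, Bonventre--Pereira, Guti\'errez--White, Rubin, and Balchin--Barnes--Roitzheim, and the saturation claim to Blumberg--Hill's result on indexing systems as translated by Rubin. Your direct representation-theoretic argument for saturation (via the embedding $\mathbb{R}[H/K]\hookrightarrow \mathbb{R}[H/L]$ induced from the trivial summand of $\mathbb{R}[K/L]$) is correct and in fact more explicit than what the paper offers, which simply cites the literature.
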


\begin{remark}
 In \cite{BH}, Blumberg and Hill defined $G$-indexing systems, which are collections of finite $H$-sets for varying subgroups $H$ of $G$ satisfying certain properties. These collections form a poset $\cI(G)$ under inclusion. They proved that every $N_\infty$ operad $\cO$ gives rise to an indexing system, and that this assignment gives a functor that descends to the homotopy category, with the resulting functor being full and faithful. Blumberg and Hill further conjectured that the functor is surjective, which was established independently by P.~Bonventre and L.~Pereira \cite{BP}, J.~Guti\'errez and D.~White \cite{GW}, and Rubin \cite{rubin_comb}. Both Rubin \cite{rubin} and Balchin, Barnes, and Roitzheim \cite{BBR} proved that the poset $\cI(G)$ of indexing systems is isomorphic to the poset $\Tr(G)$ of transfer systems. The result about linear isometries operads is the translation of the corresponding statement for indexing systems (cf. \cite[p. 678]{BH}) into the language of transfer systems (cf. \cite[Theorem 3.7]{rubin}).
\end{remark}

\autoref{operad-transfer} gives the precise link between $N_\infty$ operads and transfer systems, and explains the inclusion
\[
  \{\text{linear isometric $G$-transfer systems}\} \subseteq \STr(G).
\]
As Rubin points out in \cite[\S5.1]{rubin}, it was initially expected that the reverse inclusion would hold as well. As examples in \emph{op.cit} show, this is not generally true. Nonetheless, Rubin conjectures that when $|G|$ has large prime divisors, every saturated transfer system is linear isometric:

    \begin{conjecture}[Rubin]\label{sat conj}
        Fix a sequence of positive integers $r_1,\ldots,r_k$.  Then for distinct sufficiently large primes  $p_1,\ldots,p_k$, every saturated transfer system on $C_{p_1^{r_1}\cdots p_k^{r_k}}$ can be realized by a linear isometries operad.
    \end{conjecture}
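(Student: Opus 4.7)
The plan is to convert the conjecture into an explicit construction problem: given a saturated transfer system $R \in \STr(G)$ for $G = C_{p_1^{r_1}\cdots p_k^{r_k}}$, I would build a $G$-universe $U_R$ such that $\to_{\cL(U_R)}$ equals $R$. Because $G$ is abelian, its subgroup lattice is the product of chains, so a subgroup corresponds to a multi-index $(a_1,\ldots,a_k)$ with $0 \le a_i \le r_i$, and the irreducible real $G$-representations are indexed by dual data. The $\Gamma$-fixed points controlling $\to_{\cL(U)}$ depend only on which isomorphism classes of irreducibles appear in $U$, so the problem reduces to choosing a family $\ff$ of characters whose induction/restriction behavior exactly encodes $R$.

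First I would translate the linear isometries condition $\cL(U)([H:K])^{\Gamma(H/K)} \neq \varnothing$ into character-theoretic data, expressing it as the requirement that the irreducibles of $H$ appearing in $\mathrm{Ind}_K^H(U|_K)$ also appear in $U|_H$. Under this dictionary each relation $K\to H$ in $R$ becomes an inclusion requirement on $\ff$, and each non-relation becomes a prohibition. Saturation of $R$ should correspond to a natural closure condition on $\ff$ coming from the ``two-out-of-three'' axiom, parallel to the reformulation noted after \autoref{saturated}.

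Next I would construct $\ff$ by induction on $k$, taking the case $k = 2$ (i.e., $C_{pq^n}$, handled by \autoref{Cpqn}) as the base. At the inductive step one would tensor characters supplied by the $C_{p_1^{r_1}\cdots p_{k-1}^{r_{k-1}}}$-construction with carefully chosen characters of $C_{p_k^{r_k}}$ to encode the relations introduced by the new prime direction. The hypothesis that each $p_i$ be sufficiently large enters here: it should guarantee enough distinct non-trivial characters in each cyclic factor to separately control all the intervals of $\Sub(G)$ that require independent treatment. One expects the required lower bound on each $p_i$ to grow with $r_i$ and with $k$, and the construction to be tight in the sense that small primes really can obstruct realization (as already happens at $p=2,3$ in \autoref{Cpqn}).

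The main obstacle will be the coupling between distinct prime directions. In the $C_{pq^n}$ case, saturated transfer systems admit a clean two-dimensional ``game'' description developed in \autoref{sec:sat}, and the two-out-of-three axiom propagates essentially one axis at a time, so the character constraints decouple. For $k \ge 3$, restrictions to mixed subgroups entangle the axes: a single saturated relation on a diagonal interval can force simultaneous character constraints along several axes, and the game becomes genuinely $k$-dimensional with no obvious inductive description. The hardest technical point will be proving that, despite this entanglement, the character-selection problem remains simultaneously solvable in every prime direction provided each $p_i$ is taken large enough; isolating the right quantitative bounds on the $p_i$ as a function of $(r_1,\ldots,r_k)$ will likely drive the whole argument.
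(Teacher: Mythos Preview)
The statement you are trying to prove is a \emph{conjecture} in the paper, not a theorem: the paper does not prove it and offers no argument to compare against. What the paper does establish is the single special case $G = C_{pq^n}$ (\autoref{Cpqn}), i.e.\ two primes with one exponent equal to $1$; the full conjecture for arbitrary $C_{p_1^{r_1}\cdots p_k^{r_k}}$ remains open.

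Your proposal is therefore not a proof but a plan, and it contains a genuine gap already at the base of your induction. You write that the case $k=2$ is ``handled by \autoref{Cpqn}'', but $k=2$ means $C_{p_1^{r_1}p_2^{r_2}}$ with arbitrary exponents, whereas \autoref{Cpqn} only treats $r_1=1$. The general $C_{p^mq^n}$ case is not settled in the paper (the paper enumerates saturated transfer systems for $C_{p^mq^n}$ but does not realize them by linear isometries operads for $m\ge 2$), so your induction on $k$ has no established base. Beyond that, the inductive step you sketch---tensoring characters from the $(k-1)$-prime construction with characters of $C_{p_k^{r_k}}$---is exactly the step you yourself identify as the main obstacle, and you do not resolve it: you describe what would need to be shown (simultaneous solvability of the character constraints across all prime directions) without giving any mechanism for achieving it. The translation you propose in the first paragraph, reducing the problem to choosing a set of characters, is correct and is already in the paper via Rubin's index sets (\autoref{def:modular}, \autoref{isometriesiffmodular}); the hard content lies entirely in the construction of $I$, which even in the $C_{pq^n}$ case requires a delicate case-by-case inductive argument (types I--IV in the proof of \autoref{Cpqn}) rather than any general principle.
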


    Rubin verified the conjecture for $C_{p^n}$ for all $n\geq 1$ and for $C_{pq}$. In \autoref{sec:sat_conj}, we verify the conjecture for $C_{pq^n}$ for all $n\geq 1$.

\subsection{Generating (saturated) transfer systems} 
Recall that for a poset $P$, we say that $x<y$ is a \emph{cover relation} if there is no $z\in P$ such that $x<z<y$. The saturation property implies that saturated transfer systems can be described in terms of the cover relations it contains. 

\begin{definition}
Let $R$ be a binary relation on $\Sub(G)$ that refines inclusion. The transfer system \emph{generated} by $R$, denoted by $\langle R \rangle$, is the minimal transfer system that contains $R$. An explicit description can be found in \cite[Construction A.1]{rubin}. 
\end{definition}

\begin{prop}[{\cite[Proposition 5.8]{rubin}}]\label{generatedcover}
        \label{irreducible}
        Let $\to$ be a saturated $G$-transfer system.
        Then $\to$ is generated  by the relation 
        \[\{(K,H) \mid K \to H \text{ and } (K,H) 
        \text{ is a cover relation in }\Sub(G)\}.\]
        \end{prop}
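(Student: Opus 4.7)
The plan is to show both inclusions $\langle R\rangle \subseteq (\to)$ and $(\to) \subseteq \langle R\rangle$, where
\[
  R = \{(K,H) \mid K\to H \text{ and } (K,H) \text{ is a cover relation in }\Sub(G)\}.
\]
The first inclusion is immediate from minimality: $\to$ is itself a transfer system and it contains $R$, so the defining universal property of $\langle R\rangle$ gives $\langle R\rangle \subseteq (\to)$.

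For the reverse inclusion I would fix an arbitrary relation $K \to H$ and choose a maximal chain $K = K_0 < K_1 < \cdots < K_r = H$ in $\Sub(G)$, so that each consecutive pair $K_i < K_{i+1}$ is by construction a cover relation. The key step is to prove that each $K_i \to K_{i+1}$ already holds in $\to$, which will put $(K_i,K_{i+1}) \in R$. Transitivity inside $\langle R\rangle$ then chains these cover steps together to give $K \to_{\langle R\rangle} H$, completing the argument.

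To establish $K_i \to K_{i+1}$ I would combine two of the axioms available to a saturated transfer system. First, the reformulated two-out-of-three property stated just after \autoref{saturated} — namely, if $L \le K \le H$ and $L \to H$ then $K \to H$ — applied with $K_0 \le K_i \le H$ and the hypothesis $K_0 \to H$, yields $K_i \to H$. Second, closure under restriction applied to the relation $K_i \to H$ and the subgroup $K_{i+1} \le H$ yields $(K_i \cap K_{i+1}) \to K_{i+1}$; and since $K_i \le K_{i+1}$ we have $K_i \cap K_{i+1} = K_i$. Together these give $K_i \to K_{i+1}$.

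I do not expect a substantive obstacle: the whole argument is a short interplay between saturation (to transport the original relation $K_0 \to H$ down to each intermediate subgroup $K_i$) and restriction (to extract a single cover step from each such downward relation). Conjugation plays no role, since the argument stays entirely inside one chain in $\Sub(G)$, and the only identity to verify — that $K_i \cap K_{i+1} = K_i$ — is automatic from the chain condition.
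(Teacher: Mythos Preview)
Your argument is correct. The paper does not give its own proof of this proposition; it is quoted directly from \cite[Proposition 5.8]{rubin} without argument, so there is nothing to compare against here. Your proof is the natural one: use saturation to get $K_i\to H$ for each intermediate $K_i$, restrict along $K_{i+1}\le H$ to extract the cover step $K_i\to K_{i+1}$, and then chain these together by transitivity inside $\langle R\rangle$. No step is missing.
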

        
        As a result, a saturated $G$-transfer system is uniquely determined by the cover relations in $\Sub(G)$ it contains.
        
         \begin{remark}
        Note that a general $G$-transfer system is not necessarily
        generated by a set of cover relations, as the following example (``the chickenfoot'') illustrates for $G=C_{pq}$.
\[\begin{tikzcd}
  {C_p} & {C_{pq}} \\
  e & {C_q}
  \arrow[from=2-1, to=1-1]
  \arrow[from=2-1, to=2-2]
  \arrow[from=2-1, to=1-2]
\end{tikzcd}\]
            
    \end{remark}

\section{Saturated transfer systems on $C_{p^mq^n}$}\label{sec:sat}

In this section we concentrate on studying transfer systems on the group $C_{p^mq^n}$. In what follows, for $k\geq 0$, we denote by $[k]$ the poset $\{0<1<\dots< k\}$.

The subgroup lattice $\Sub(C_{p^mq^n})$ is isomorphic to the grid $[m]\times [n]$, with the subgroup $C_{p^iq^j}$ corresponding to $(i,j)$. For ease of notation, we will use this identification when referring to transfer systems on $C_{p^mq^n}$.
\begin{center}
\begin{tikzpicture}
\node at (0,0) {$\bullet$};
\node at (-0.3, -0.2) {\tiny{$(0,0)$}};
\node at (0,1) {$\bullet$};
\node at (-0.4, 1.2) {\tiny{$(0,1)$}};
\node at (1,0) {$\bullet$};
\node at (1.4,-0.2) {\tiny{$(1,0)$}};
\node at (1,1) {$\bullet$};
\node at (1.4,1.2) {\tiny{$(1,1)$}};
\node at (5,0) {$\bullet$};
\node at (5.5, -0.2) {\tiny{$(m,0)$}};
\node at (5,4) {$\bullet$};
\node at (5.5,4.2) {\tiny{$(m,n)$}};
\node at (0,4) {$\bullet$};
\node at (-0.4,4.2) {\tiny{$(0,n)$}};
\node at (5,1) {$\bullet$};
\node at (5.5,1.2) {\tiny{$(m,1)$}};
\node at (1,4) {$\bullet$};
\node at (1.4,4.2) {\tiny{$(1,n)$}};
\node at (0.5,2.5) {$\vdots$};
\node at (3,0.5) {$\hdots$};
\node at (5,2.5) {$\vdots$};
\node at (3,4) {$\hdots$};
\node at (3,2.5) {$\iddots$};
\end{tikzpicture}
\end{center}

            The following is meant to clarify how we will define and refer to rows and columns of the grid.
            
    \begin{notation}
        In a $m\times n$ grid, \emph{row} $j$ will refer to the edges between $(i,j-1)$ and $(i,j)$ for $i=0,\dots , m$. As such, there are $n$ \emph{rows} in the grid, numbered from 1 to $n$ (there is no row 0). Row $j$ is shown below.
        \begin{center}
            \begin{tikzpicture}
            \node at (0,0) {$\bullet$};
            \node at (0,1) {$\bullet$};
            \node at (1,0) {$\bullet$};
            \node at (1,1) {$\bullet$};
            \node at (3,0) {$\bullet$};
            \node at (3,1) {$\bullet$};
            \node at (4,0) {$\bullet$};
            \node at (4,1) {$\bullet$};
            \node at (-0.5,-0.3) {\tiny{$(0,j-1)$}};
            \node at (1,-0.3) {\tiny{$(1,j-1)$}};
            \node at (-0.4,1+0.3) {\tiny{$(0,j)$}};
            \node at (1,1+0.3) {\tiny{$(1,j)$}};
            \node at (2,0.5) {$\dots$};
            \node at (3, -0.3) {\tiny{$(m-1,j-1)$}};
            \node at (4.8, -0.3) {\tiny{$(m,j-1)$}};
            \node at (3, 1.3) {\tiny{$(m-1,j)$}};
            \node at (4.5, 1.3) {\tiny{$(m,j)$}};
            \end{tikzpicture}
        \end{center}
        Similarly, column $i$ will refer to the edges between $(i-1,j)$ and $(i,j)$ for $j=0,\dots, n$. There are $m$ columns, numbered from $1$ to $m$.
    \end{notation}

\subsection{Characterizing saturated transfer systems for $C_{p^mq^n}$}
        
        We now characterize the sets of cover relations within saturated transfer systems for $C_{p^mq^n}$. As above, we denote the subgroup $C_{p^iq^j}$ by the pair $(i,j)$. The cover relations in this case are of the form $(i,j) \to (i+1,j)$ and $(i,j) \to (i,j+1)$, that is, the edges of the grid.

        \begin{theorem}
        \label{lemmaX}
            Let $S$ be a set of cover relations within the lattice $[m]\times [n]\cong \Sub(C_{p^mq^n})$. Then $S$
            is the set of all cover relations within a saturated transfer system if and only
            if the following conditions are satisfied:
            \begin{enumerate}
             \item if $(i,j) \to (i+1,j)$ is in $S$, then $(i,k) \to (i+1,k)$ for all $k<j$;
             \item if $(i,j) \to (i,j+1)$ is in $S$, then $(k,j) \to (k,j+1)$ for all $k<i$;
             \item if three out of the four edges in the square
             \begin{center}
                 \begin{tikzpicture}
        \node at (0,0) {$\bullet$};
        \node at (-0.3, -0.2) {\tiny{$(i,j)$}};
        \node at (0,1) {$\bullet$};
        \node at (-0.55, 1.2) {\tiny{$(i,j+1)$}};
        \node at (1,0) {$\bullet$};
        \node at (1.55, -0.2) {\tiny{$(i+1,j)$}};
        \node at (1,1) {$\bullet$};
        \node at (1.8, 1.2) {\tiny{$(i+1,j+1)$}};
        \draw (0,0) -- (0,1);
        \draw (0,0) -- (1,0);
        \draw (0,1) -- (1,1);
        \draw (1,0) -- (1,1);
    \end{tikzpicture}
    		\end{center}
             are in $S$, then so is the fourth.
            \end{enumerate}
           
        \end{theorem}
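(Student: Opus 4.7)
The statement is a biconditional. For necessity, assume $S$ is the set of cover relations of a saturated transfer system $\to$. Condition (1) is a direct application of closure under restriction: if $(i,j)\to(i+1,j)$ lies in $\to$ and $k<j$, then restricting along $(i+1,k)\le(i+1,j)$ yields $(i,k)\to(i+1,k)$, which is a lattice cover and hence lies in $S$. Condition (2) is symmetric. For (3), suppose three of the four edges of a unit square are in $\to$. In each of the four configurations, one combines transitivity (producing the diagonal $(i,j)\to(i+1,j+1)$) with the two-out-of-three property applied to the relevant triple $L\le K\le H$ to deduce the missing edge; since the missing edge is a lattice cover, it lies in $S$.

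For sufficiency, assume $S$ satisfies (1)--(3) and define a relation $\to$ on $[m]\times[n]$ by declaring $(i_0,j_0)\to(i_1,j_1)$ if $(i_0,j_0)\le(i_1,j_1)$ and there exists a monotone lattice path from $(i_0,j_0)$ to $(i_1,j_1)$ traversing only edges of $S$. The heart of the argument is a path straightening lemma: whenever any such $S$-path exists, every edge of the rectangle $[i_0,i_1]\times[j_0,j_1]$ belongs to $S$. I would prove this by showing that any consecutive up-then-right subpath $(a,b)\to(a,b+1)\to(a+1,b+1)$ of an $S$-path can be swapped to right-then-up: condition (1) applied to the $S$-edge $(a,b+1)\to(a+1,b+1)$ supplies $(a,b)\to(a+1,b)\in S$, and then condition (3) applied to the resulting three-of-four unit square forces $(a+1,b)\to(a+1,b+1)\in S$. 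Iterating this swap (monitored by the area under the path) terminates at the right-then-up path along row $j_0$ and column $i_1$; a symmetric argument using (2) in place of (1) produces the up-then-right path. Now condition (1) applied to the horizontal edges of the up-then-right path, together with condition (2) applied to the vertical edges of the right-then-up path, fills in every remaining edge of the rectangle.

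Given the straightening lemma, the remaining verifications are routine. Reflexivity, transitivity, and (vacuous) conjugation closure of $\to$ are immediate; closure under restriction reduces, by a short case analysis on the relative positions of source and restricting subgroup, to finding a monotone $S$-path inside a subrectangle already filled by the straightening lemma. Saturation is then immediate: if $(i_0,j_0)\to(i_1,j_1)$ and $(i_0,j_0)\le(i_2,j_2)\le(i_1,j_1)$, the whole rectangle $[i_0,i_1]\times[j_0,j_1]$ is in $S$, so in particular the right-then-up path from $(i_2,j_2)$ to $(i_1,j_1)$ exists in $S$. Finally, the cover relations of $\to$ coincide with $S$ because the only monotone path realizing a lattice cover is the single edge itself.

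The straightening lemma is the main obstacle, since it is the only place where the hypotheses are combined nontrivially: conditions (1) or (2) supply a third edge of a unit square from two path edges, after which (3) can be invoked to complete the square. This interplay also clarifies why each of the three conditions is individually necessary for the conclusion.
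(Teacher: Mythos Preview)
Your proof is correct and follows essentially the same strategy as the paper's: both reduce the backward implication to showing that an $S$-path between two grid points forces every edge of the spanned rectangle into $S$. The paper proves this rectangle-filling fact by induction on the rectangle's dimensions (peeling off the last step of a witnessing path and then filling the final column via (1) and (3)) and packages the resulting relation as the generated transfer system $\langle S\rangle$, whereas you argue it via local path-swapping moves and verify the transfer-system axioms directly; these are minor tactical differences rather than a different approach.
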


        \begin{proof}
        We first prove the forward direction. Let $\ff = {\to}$ be a saturated transfer system and let $S$ be its set of cover relations.  Conditions (1) and (2) follow from the restriction axiom on transfer systems. For condition (3), suppose that $S$ contains three edges of the square. Then $S$ either contains $(i,j) \to (i+1,j)$ and $(i+1,j) \to  (i+1,j+1)$, or $(i,j) \to (i,j+1)$ and $(i,j+1) \to  (i+1,j+1)$. In either case, by transitivity, $\ff$ must contain $(i,j)\to (i+1,j+1)$. Thus, by restriction and saturation, $\ff$, and hence $S$, must contain all the edges of the square.

To prove the backwards direction, let $S$ be a set of cover relations satisfying the conditions, and consider the transfer system $\ff=\langle S \rangle$ it generates. We will prove that $\ff$ is saturated, and that $S$ is precisely the set of cover relations within $\ff$. 

Recall that $\ff$ is constructed in general by first closing $S$ under restriction, and then closing under transitivity. Conditions (1) and (2) imply that ignoring identities, $S$ itself is already closed under restriction. Indeed, suppose $(i,j) \to (i+1,j)$ is in $S$. Then restriction with respect to $(a,b)$ for $a\leq i+1$ and $b \leq j$ gives $(i,b) \to (i+1,b)$ if $a=i+1$, and gives $(a,b)\to (a,b)$ otherwise. A similar consideration follows for vertical edges. Thus $\ff$ is constructed by closing $S$ under transitivity, and hence, the set of cover relations in $\ff$ is precisely $S$.

To prove $\ff$ is saturated, it suffices to prove that if the edge $(i,j)\to (i+u,j+v)$ is in $\ff$, then all the edges of the corresponding $u\times v$ grid are in $S$. We proceed by induction on $(u,v)$, with the base case $(0,0)$ being trivially satisfied. Suppose the statement is true for $(u,v-1)$ and $(u-1,v)$, unless $v=0$ or $u=0$, in which case we only assume the one that makes sense. If $(i,j)\to (i+u,j+v)$ is in $\ff$, then there is a path of cover relations in $S$ that starts at $(i,j)$ and ends at $(i+u,j+v)$. Assume without loss of generality that the last step of the path is the horizontal edge $(i+u-1,j+v) \to (i+u,j+v)$. 
 \begin{center}
        \begin{tikzpicture}
            \node at (0,0) {$\bullet$};
            \node at (-.3,-.2) {\tiny{$(i,j)$}};
            \node at (1,0) {$\bullet$};
            \node at (2,0) {$\bullet$};
            \node at (3,0) {$\bullet$};
            \node at (4,0) {$\bullet$};
            \node at (0,1) {$\bullet$};
            \node at (1,1) {$\bullet$};
            \node at (2,1) {$\bullet$};
            \node at (3,1) {$\bullet$};
            \node at (4,1) {$\bullet$};
            \node at (1,2) {$\bullet$};
            \node at (0,2) {$\bullet$};
            \node at (2,2) {$\bullet$};
            \node at (3,2) {$\bullet$};
            \node at (4,2) {$\bullet$};
            \node at (4.8,2.2) {\tiny{$(i+u,j+v)$}};
            \draw (0,0) -- (1,0);
            \draw (1,0) -- (1,1);
            \draw (1,1) -- (2,1);
            \draw (2,1) -- (2,2);
            \draw (2,2) -- (4,2);
        \end{tikzpicture}
    \end{center}
Then there is a path of cover relations from $(i,j)$ to $(i+u-1,j+v)$, which implies $(i,j) \to (i+u-1,j+v)$ is in $\ff$, and by the inductive hypothesis we have that all the edges of the $(u-1)\times v$ grid are in $S$.  Furthermore, by condition (1), all edges $(i+u-1,k) \to (i+u,k)$ for $k\leq j+v$ are in $S$. Now, the squares in the last column of the grid have three of their edges in $S$, so by condition (3) the fourth edge must be in $S$ as well, thus showing that all the edges of the $u\times v$ grid are in $S$.
\end{proof}
        
   \begin{definition}\label{saturated cover}
   Let $S$ be a subset of cover relations within the lattice $[m]\times [n]$. If $S$ satisfies the conditions of \autoref{lemmaX}, we say that $S$ is a \emph{saturated cover}. We denote the set of saturated covers for $[m]\times [n]$ by $\SCov(m,n)$.
   \end{definition}

\begin{corollary}
There is a bijection between saturated transfer systems on $\STr(C_{p^mq^n})$ and $\SCov(m,n)$.
\end{corollary}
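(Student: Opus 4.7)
The plan is to assemble the bijection directly from \autoref{lemmaX} together with \autoref{generatedcover}. Define the forward map
\[
  \Phi\colon \STr(C_{p^mq^n})\longrightarrow \SCov(m,n)
\]
by sending a saturated transfer system $\ff$ to its set of cover relations $\Phi(\ff)\subseteq \{(i,j)\to(i+1,j),\,(i,j)\to(i,j+1)\}$. The forward direction of \autoref{lemmaX} shows that $\Phi(\ff)$ satisfies conditions (1)--(3), hence lies in $\SCov(m,n)$, so $\Phi$ is well defined.

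Next I would define the reverse map
\[
  \Psi\colon \SCov(m,n)\longrightarrow \STr(C_{p^mq^n})
\]
by $\Psi(S)=\langle S\rangle$, the transfer system generated by $S$. The backward direction of \autoref{lemmaX} shows that $\langle S\rangle$ is indeed saturated (and in fact that $S$ is already closed under restriction up to identities), so $\Psi$ lands in $\STr(C_{p^mq^n})$.

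It remains to check that $\Phi$ and $\Psi$ are mutually inverse. For $\Psi\circ\Phi=\mathrm{id}$: given a saturated transfer system $\ff$, \autoref{generatedcover} asserts that $\ff$ is generated by its cover relations, i.e.\ $\ff=\langle\Phi(\ff)\rangle=\Psi(\Phi(\ff))$. For $\Phi\circ\Psi=\mathrm{id}$: given $S\in\SCov(m,n)$, the backward direction of \autoref{lemmaX} explicitly records that the set of cover relations inside $\langle S\rangle$ is precisely $S$, so $\Phi(\Psi(S))=S$.

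No step presents a genuine obstacle since all of the real combinatorial content has already been absorbed into \autoref{lemmaX} and \autoref{generatedcover}; the only subtlety worth flagging is the mild verification that no new cover relation is introduced when closing $S$ under transitivity, which is exactly what the last paragraph of the proof of \autoref{lemmaX} establishes.
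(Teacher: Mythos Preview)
Your proof is correct and follows exactly the paper's approach: the paper's proof is the one-line sentence ``This follows from \autoref{generatedcover} and \autoref{lemmaX},'' and you have simply unpacked that sentence by writing down the two maps and checking they are mutual inverses using precisely those two results.
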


\begin{proof}
This follows from \autoref{generatedcover} and \autoref{lemmaX}.
\end{proof}

 \begin{remark}
                A set $S$ of cover relations is a saturated cover if and only if the restriction of $S$ to each of the $1\times 1$ squares in the grid is a saturated transfer system, i.e., it is one of the following seven options.
            \begin{center}
    \fbox{
    \begin{tikzpicture}
        \node at (0,0) {$\bullet$};
        \node at (0,1) {$\bullet$};
        \node at (1,0) {$\bullet$};
        \node at (1,1) {$\bullet$};
    \end{tikzpicture}
    }
    \fbox{
    \begin{tikzpicture}
        \node at (0,0) {$\bullet$};
        \node at (0,1) {$\bullet$};
        \node at (1,0) {$\bullet$};
        \node at (1,1) {$\bullet$};
        \draw (0,0) -- (1,0);
    \end{tikzpicture}
    }
    \fbox{
    \begin{tikzpicture}
        \node at (0,0) {$\bullet$};
        \node at (0,1) {$\bullet$};
        \node at (1,0) {$\bullet$};
        \node at (1,1) {$\bullet$};
        \draw (0,0) -- (0,1);
    \end{tikzpicture}
    }
    \fbox{
    \begin{tikzpicture}
        \node at (0,0) {$\bullet$};
        \node at (0,1) {$\bullet$};
        \node at (1,0) {$\bullet$};
        \node at (1,1) {$\bullet$};
        \draw (0,0) -- (1,0);
        \draw (0,0) -- (0,1);
    \end{tikzpicture}
    }
    \fbox{
    \begin{tikzpicture}
        \node at (0,0) {$\bullet$};
        \node at (0,1) {$\bullet$};
        \node at (1,0) {$\bullet$};
        \node at (1,1) {$\bullet$};
        \draw (0,0) -- (1,0);
        \draw (0,1) -- (1,1);
    \end{tikzpicture}
    }
    \fbox{
    \begin{tikzpicture}
        \node at (0,0) {$\bullet$};
        \node at (0,1) {$\bullet$};
        \node at (1,0) {$\bullet$};
        \node at (1,1) {$\bullet$};
        \draw (0,0) -- (0,1);
        \draw (1,0) -- (1,1);
    \end{tikzpicture}
    }
    \fbox{
    \begin{tikzpicture}
        \node at (0,0) {$\bullet$};
        \node at (0,1) {$\bullet$};
        \node at (1,0) {$\bullet$};
        \node at (1,1) {$\bullet$};
        \draw (0,0) -- (0,1);
        \draw (0,0) -- (1,0);
        \draw (0,1) -- (1,1);
        \draw (1,0) -- (1,1);
    \end{tikzpicture}
    }
\end{center}
        \end{remark}

Note that by (1) and (2) of \autoref{lemmaX}
it is sufficient to record the highest horizontal edge in each column and the rightmost vertical edge in each row. Equivalently, we can record the number of horizontal edges in each column and the number of vertical edges in each row.
This along with (3) of \autoref{lemmaX} allows us to 
demonstrate a bijection which leads us to 
encode $C_{p^m q^n}$-saturated transfer systems much
more compactly in terms of \emph{compatible codes} as 
follows.

    \begin{definition}
    \label{aibj}
    Let $S$ be a saturated cover on $[m] \times [n]$. For $1\leq i\leq m$, $1\leq j\leq n$,
    \begin{align*}
        a_i &= |\{k\colon (i-1,k)\to (i,k)\in S\}|\\
        b_j &= |\{k \colon (k,j-1)\to (k,j)\in S\}|
    \end{align*}
    In other words, $a_i$ is the number of horizontal edges in column $i$, which can range from $0$ to $n+1$, and $b_j$ is the number of vertical edges in row $j$, which can range from $0$ to $m+1$. We call $a=(a_1,\ldots,a_m)$ the \emph{horizontal code} of $S$, and $b=(b_1,\ldots,b_n)$ the \emph{vertical code} of $S$.
    \end{definition}

    \begin{example}
        Consider the following saturated cover $S$ on $[3]\times [2]$.
        \begin{center}
            \begin{tikzpicture}
                \node at (0,0) {$\bullet$};
                \node at (1,0) {$\bullet$};
                \node at (2,0) {$\bullet$};
                \node at (3,0) {$\bullet$};
                \node at (0,1) {$\bullet$};
                \node at (1,1) {$\bullet$};
                \node at (2,1) {$\bullet$};
                \node at (3,1) {$\bullet$};
                \node at (0,2) {$\bullet$};
                \node at (1,2) {$\bullet$};
                \node at (2,2) {$\bullet$};
                \node at (3,2) {$\bullet$};
                \draw (0,0) -- (0,1);
                \draw (0,2) -- (1,2);
                \draw (0,1) -- (1,1);
                \draw (0,0) -- (1,0);
                \draw (1,0) -- (1,1);
                \draw (1,0) -- (2,0);
                \draw (2,1) -- (2,2);
                \draw (1,1) -- (1,2);
                \draw (0,1) -- (0,2);
                \draw (2,0) -- (3,0);
            \end{tikzpicture}
        \end{center}
        The horizontal code of $S$ is $(3,1,1)$ and the vertical code of $S$ is $(2,3)$.
    \end{example}
    
    \begin{definition}
    Let $m,n\geq 0$. A pair of \emph{compatible codes} on $[m]\times [n]$ is a pair $(a_1,\dots,a_m),(b_1,\dots,b_n)$ of tuples of integers such that $0\le a_i \le n+1$, $0\le b_j\le m+1$, and 
        \[b_{a_i} \leq i \quad \text{and} \quad
         a_{b_j} \leq j,\]
         whenever these are defined.
\end{definition}

\begin{prop}\label{prop:valid_seq}
 There is a bijection between the set of saturated covers and pairs of compatible codes on $[m]\times [n]$.
\end{prop}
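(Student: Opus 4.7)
The plan is to exhibit explicit maps in each direction, using \autoref{lemmaX} to translate between the geometric description of a saturated cover and the numerical data of its codes.

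In one direction, I send a saturated cover $S$ to the pair $(a,b)$ of \autoref{aibj}. Conditions (1) and (2) of \autoref{lemmaX} say that the horizontal edges of column $i$ are precisely $(i-1,k)\to(i,k)$ for $0\le k<a_i$, with the analogous statement for rows; in particular $S$ is recovered from $(a,b)$. To check compatibility, fix $i$ with $1\le a_i\le n$ and consider the unit square with corners $(i-1,a_i-1)$, $(i,a_i-1)$, $(i-1,a_i)$, $(i,a_i)$. Its bottom horizontal edge lies in $S$ while its top horizontal edge does not. If $b_{a_i}\ge i+1$, both vertical edges of this square would also lie in $S$, and \autoref{lemmaX}(3) would force the top edge into $S$ as well, a contradiction. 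Hence $b_{a_i}\le i$, and the inequality $a_{b_j}\le j$ follows by symmetry.

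In the reverse direction, given compatible codes $(a,b)$, I define $S$ to be the set of horizontal edges $(i-1,k)\to(i,k)$ for $0\le k<a_i$ together with the vertical edges $(k,j-1)\to(k,j)$ for $0\le k<b_j$. Conditions (1) and (2) of \autoref{lemmaX} are immediate from the construction. For condition (3), I translate edge-presence in the unit square with lower-left corner $(i,j)$: the bottom, top, left, and right edges lie in $S$ iff $a_{i+1}>j$, $a_{i+1}>j+1$, $b_{j+1}>i$, and $b_{j+1}>i+1$, respectively. Two of the four three-out-of-four implications are automatic, since ``top present'' forces ``bottom present'' and ``right present'' forces ``left present''. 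In the remaining case where bottom, top, and left are present but right is absent, we have $a_{i+1}\ge j+2$ and $b_{j+1}=i+1$, so compatibility gives $a_{i+1}=a_{b_{j+1}}\le j+1$, contradicting $a_{i+1}\ge j+2$. The dual case is handled symmetrically via $b_{a_{i+1}}\le i+1$.

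Mutual invertibility is then clear: starting from compatible codes, the codes of the constructed cover return the originals by construction; starting from a saturated cover $S$, the original set is recovered from its codes via conditions (1) and (2) of \autoref{lemmaX}. The main obstacle I anticipate is purely bookkeeping, as the indexing conventions for rows and columns involve off-by-one shifts that have to be lined up carefully with the unit square in which condition (3) is verified.
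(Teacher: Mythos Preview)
Your argument is correct and follows the same route as the paper: assign to a saturated cover its horizontal and vertical codes, use the critical unit square at the top of column $i$ (resp.\ right of row $j$) together with \autoref{lemmaX}(3) to derive the compatibility inequalities, and conversely build $S$ from codes and verify the conditions of \autoref{lemmaX}. Your treatment of condition (3) in the reverse direction is in fact more explicit than the paper's, which simply remarks that ``the inequality constraints imply condition (3)''; your case split (two implications automatic, the other two forced by $a_{b_{j+1}}\le j+1$ and $b_{a_{i+1}}\le i+1$) fills this in cleanly.
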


\begin{proof}
 Let $(a_1,\dots,a_m),(b_1,\dots,b_n)$ be the horizontal and vertical codes assigned to a saturated cover $S$, fix $i=1,\dots m$, and let $j=a_i$. If $j=0$ or $n+1$, $b_j$ is undefined. Otherwise, consider the following square in the grid.
              \begin{center}
                 \begin{tikzpicture}
        \node at (0,0) {$\bullet$};
        \node at (-0.8, -0.2) {\tiny{$(i-1,j-1)$}};
        \node at (0,1) {$\bullet$};
        \node at (-0.55, 1.2) {\tiny{$(i-1,j)$}};
        \node at (1,0) {$\bullet$};
        \node at (1.55, -0.2) {\tiny{$(i,j-1)$}};
        \node at (1,1) {$\bullet$};
        \node at (1.3, 1.2) {\tiny{$(i,j)$}};
        \draw[dotted] (0,0) -- (0,1);
        \draw (0,0) -- (1,0);
        \draw[dotted] (0,1) -- (1,1);
        \draw (1,0) -- (1,1);
    \end{tikzpicture}
    		\end{center}
The fact that $a_i=j$ implies that $(i-1,j-1)\to (i,j-1)$ is in $S$, while $(i-1,j)\to (i,j)$ is not, as indicated in the picture. If $b_j>i$, then both vertical edges of the square must be in $S$, violating the 3-out-of-4 condition, thus $b_j\leq i$. The argument for the other inequality is symmetric, thus proving that horizontal and vertical codes are compatible.

Conversely, given compatible codes $(a_1,\dots,a_m),(b_1,\dots,b_m)$, consider the cover relation 
\[ S = \{ (i-1,j) \to (i,j) \mid 1\leq i \leq m \text{ and } j<a_i\} \cup \{ (i,j-1) \to (i,j) \mid 1\leq j \leq n \text{ and } i<b_j\}.\]
By construction $S$ satisfies conditions (1) and (2), and just as above, the inequality constraints imply condition (3).  These two constructions are inverses of each other, thus establishing the bijection.
\end{proof}

\section{Enumeration of saturated transfer systems on $C_{p^mq^n}$}\label{sec:enum}

        The main result of this section is a closed formula for the number of 
        saturated transfer systems on $C_{p^m q^n}$. Throughout we denote the 
        number of saturated transfer systems
        for $C_{p^m q^n}$ as $s(m,n)$. We use the concept of saturated covers of \autoref{saturated cover}.

    \subsection{Recursive formula}\label{sec:recursive}
 We first prove a recursive formula for the number
        of saturated transfer systems. This 
        recursive formula allows us to prove a closed formula for 
        the number of saturated transfer systems that depends on $m$ and $n$
        in \autoref{closed_form_subsection}. 
        
        \begin{theorem}
        \label{recursive}
            Let $m,n\geq 0$. Then
            \begin{equation*}
                s(m,n+1) = s(m,n) + \sum_{k = 0}^{m}{{m+1 \choose k} s(k,n)}.
    \end{equation*}
        \end{theorem}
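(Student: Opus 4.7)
The plan is to count $\SCov(m,n+1)$ by classifying each saturated cover $S$ according to its restriction $S' \in \SCov(m,n)$ to the sub-grid $[m] \times [n]$. Restriction is well-defined since conditions (1)--(3) of \autoref{lemmaX} all restrict to the sub-grid, giving a map $R \colon \SCov(m,n+1) \to \SCov(m,n)$ and the identity $s(m,n+1) = \sum_{S' \in \SCov(m,n)} |R^{-1}(S')|$. For fixed $S'$ with codes $(a',b')$, I would write $J := \{i : a'_i = n+1\}$ for the set of ``maxed'' columns; the compatibility forces $b'_j \notin J$ for all $j \le n$.

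An extension $S$ of $S'$ is encoded by the value $L := b_{n+1}(S) \in \{0,\dots,m+1\}$ together with the top-row horizontal data. A case analysis on whether $i < L$ (both verticals of the top square present), $i = L$ (left vertical only), or $i > L$ (no verticals), applying the $3$-out-of-$4$ condition, shows that $a_i$ is uniquely determined by $a'_i$ in all cases except when $i > L$ and $i \in J$, in which case there is a free binary choice $a_i \in \{n+1, n+2\}$; moreover all compatibility relations in $[m] \times [n+1]$ are then automatic. Thus $|R^{-1}(S')| = \sum_{L=0}^{m+1} 2^{|J_{>L}|}$, where $J_{>L} := J \cap \{L+1,\dots,m\}$. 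A double-counting swap (using that $T \subseteq J_{>L}$ iff $L < \min T$, which has $\min T$ solutions for $T \ne \varnothing$ and all $m+2$ values for $T = \varnothing$) rewrites this as
\[
|R^{-1}(S')| = (m+2) + \sum_{\varnothing \ne T \subseteq J} \min T.
\]

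Summing over $S'$ and grouping by $T$ then gives
\[
s(m, n+1) = (m+2)\, s(m,n) + \sum_{\varnothing \ne T \subseteq \{1,\dots,m\}} (\min T)\, f(T),
\]
where $f(T) := |\{S' \in \SCov(m,n) : a'_i = n+1 \text{ for all } i \in T\}|$. The key lemma to establish is $f(T) = s(m - |T|, n)$, proved via \emph{column contraction}: since each column $i \in T$ has all horizontal edges present at heights $0,\dots,n$, identifying each such column with its left neighbor and reindexing the codes gives a bijection between this subset and $\SCov(m - |T|, n)$. Verifying that this contraction preserves conditions (1)--(3) and the compatibility relations --- using crucially that $b'_j \notin T$ for all $j \le n$, so that the reindexing of the $b$-code is well-defined --- is the main technical step of the argument.

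With $f(T) = s(m-|T|, n)$, I would evaluate the double sum by grouping $T$ by $t = |T|$ and $p = \min T$; there are $\binom{m-p}{t-1}$ such $T$, and a short manipulation (via $q \binom{q}{t-1} = (t-1)\binom{q}{t-1} + t \binom{q}{t}$ and the hockey-stick identity) yields $\sum_{p=1}^{m-t+1} p \binom{m-p}{t-1} = \binom{m+1}{t+1}$. Reindexing by $k = m - t$ and using $\binom{m+1}{t+1} = \binom{m+1}{k}$ converts the double sum to $\sum_{k=0}^{m-1} \binom{m+1}{k}\, s(k, n)$. Finally, writing $(m+2)\, s(m,n) = s(m,n) + \binom{m+1}{m}\, s(m,n)$ to absorb the leading term into a new $k=m$ summand yields the stated recursion.
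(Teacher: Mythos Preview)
Your argument is correct and complete; the fiber count, the double-counting swap, the column-contraction bijection $f(T)=s(m-|T|,n)$, and the final binomial manipulation all check out.

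However, your route is genuinely different from the paper's. The paper defines a single function $c\colon \SCov(m,n+1)\to \mathcal P[m]$ which records, in one subset $A$ of $\{0,\dots,m\}$, both the top-row vertical data (the positions $0,\dots,k$ carrying a vertical edge) and the top-row horizontal data (for $i>k+1$, whether $(i-1,n+1)\to(i,n+1)$ is absent). It then shows $|c^{-1}(A)|=s(|A|,n)$ for $A\subsetneq[m]$ and $|c^{-1}([m])|=s(m,n)$, so the recursion drops out immediately from $\sum_A |c^{-1}(A)|$. The column-contraction bijection appears once, inside the fiber computation, with the columns to be collapsed read off directly from $A$.

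By contrast, you first restrict to $[m]\times[n]$, compute the fiber size as a function of the ``maxed'' set $J=J(S')$, and only then sum over $S'$; this forces an extra layer of manipulation (the swap $\sum_L 2^{|J_{>L}|}=(m+2)+\sum_{T\ne\varnothing}\min T$, the identity $\sum_p p\binom{m-p}{t-1}=\binom{m+1}{t+1}$, and the final regrouping of $(m+2)s(m,n)$). Your column-contraction lemma $f(T)=s(m-|T|,n)$ is morally the same bijection as the paper's, just applied one row lower. The paper's packaging is more economical --- the function $c$ is engineered so that the binomial coefficients $\binom{m+1}{k}$ appear for free as the number of subsets of size $k$ --- whereas your approach makes the dependence on the restricted cover $S'$ explicit, at the cost of more bookkeeping.
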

    
    In essence, we build saturated covers on $[m]\times [n+1]$ out of saturated covers on $[k]\times[n]$, where $k$ ranges from 0
 to $m$. In order to do so, we partition the set of saturated covers on $[m]\times [n+1]$ using the following construction. Let $\cP[m]$ denote the power set of $[m]=\{0,1,\dots, m\}$. For the remainder of the section, we fix $m,n\geq 0$. 

\begin{construction}\label{cons:circling}
We construct a function $c\colon \SCov(m,n)\to \cP[m]$ as follows. Let $S$ be a saturated cover, and suppose $(k,n)\to (k,n+1)$ is the rightmost vertical edge on the $(n+1)$-th row of $S$. That is, $(k,n)\to (k,n+1)$ is in $S$, but  $(k+1,n)\to (k+1,n+1)$ is not. If $S$ has no vertical edges on the $(n+1)$-th row, we set $k=-1$. Then we define
\[c(S)=\{0,1,\dots, k\} \cup \{ i \mid i >k+1 \text{ and } (i-1,n+1) \to (i,n+1) \not\in S\}.
\]
\end{construction}

We will prove \autoref{recursive} by enumerating the fibers of $c$. We first give an example to explain the information encoded by $c$.

\begin{remark}\label{circling}
Let $S$ be a saturated cover on $[4]\times[1]$ with $c(S)=\{0,1,4\}$. The figure below shows $S$, with edges that $S$ must contain in solid black, and the edges $S$ cannot contain in dashed red. The vertices corresponding to the elements in $c(s)$ are marked with a circle.
\begin{center}
        \begin{tikzpicture}
            \node at (0,0) {$\bullet$};
            \node at (0,1) {$\bigodot$};
            \node at (0,1) {$\bullet$};
            \node at (1,0) {$\bullet$};
            \node at (1,1) {$\bigodot$};
            \node at (1,1) {$\bullet$};
            \node at (2,0) {$\bullet$};
            \node at (2,1) {$\bullet$};
            \node at (3,0) {$\bullet$};
            \node at (3,1) {$\bullet$};
            \node at (4,0) {$\bullet$};
            \node at (4,1) {$\bullet$};
            \node at (4,1) {$\bigodot$};
            \node at (0,-.3) {\tiny{$(0,0)$}};
            \node at (1,-.3) {\tiny{$(1,0)$}};
            \node at (2,-.3) {\tiny{$(2,0)$}};
            \node at (3,-.3) {\tiny{$(3,0)$}};
            \node at (4,-.3) {\tiny{$(4,0)$}};
            \node at (0,1.4) {\tiny{$(0,1)$}};
            \node at (1,1.4) {\tiny{$(1,1)$}};
            \node at (2,1.4) {\tiny{$(2,1)$}};
            \node at (3,1.4) {\tiny{$(3,1)$}};
            \node at (4,1.4) {\tiny{$(4,1)$}};
            \draw (0,0) -- (0,1);
            \draw (1,0) -- (1,1);
            \draw [red,dashed] (2,0) -- (2,1);
            \draw [red,dashed] (3,0) -- (3,1);
            \draw [red,dashed] (4,0) -- (4,1);
            \draw [red,dashed] (1,1) -- (2,1);
            \draw [red,dashed] (3,1) -- (4,1);
            \draw (2,0) -- (3,0);
            \draw (2,1) -- (3,1);
        \end{tikzpicture}
    \end{center}
Note that the minimal element in the complement of $c(S)$, in this case 2, corresponds to $k+1$ in the construction above, and as such, it is the leftmost vertex without a vertical edge. This explains the dashed red edges $(i,0)\to (i,1)$ for $i=2,3,4$. For $i>k+1$, $(i,n+1)$ is the target of a horizontal edge if and only if $i$ is not in $c(S)$, which explains the solid black edge $(2,1)\to (3,1)$. By the saturated condition, this last edge implies that $S$ must contain $(2,0)\to(3,0)$. Moreover, note that $(1,1)\to (2,1)$ cannot be in $S$; indeed, if it is, then $(1,0)\to (2,0)$ must be in $S$ as well, violating 3-out-of-4. 

The rest of the horizontal edges may or may not be there, with the caveat that the edge $(0,1)\to (1,1)$ is in $S$ if and only if $(0,0) \to (1,0)$ is there too. Thus, the remaining bottom horizontal edges determine $S$.
 \end{remark}

\begin{prop}\label{recursive-bijection}
Let $A\subseteq [m]$. Then 
\[|c^{-1}(A)|=\begin{cases}
			s(|A|,n) &\text{if }A\subsetneq [m],\\
			s(m,n) &\text{if } A=[m].
		\end{cases}\]
\end{prop}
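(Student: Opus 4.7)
The plan is to analyze each fiber $c^{-1}(A)$ by decomposing a saturated cover $S\in c^{-1}(A)$ into its top row (row $n+1$) and its restriction $T := S\cap([m]\times[n])$, which is itself a saturated cover on $[m]\times[n]$. Most of the top row will be determined by $A$, and the remaining freedom in $T$ will match $\SCov(|A|,n)$ (for $A\subsetneq[m]$) or $\SCov(m,n)$ (for $A=[m]$).

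First I will decode which edges of row $n+1$ are forced by the condition $c(S)=A$. If $A\neq[m]$, write $k+1:=\min([m]\setminus A)$ and $B:=\{i>k+1:i\notin A\}$; if $A=[m]$, set $k:=m$ and $B:=\emptyset$. By \autoref{cons:circling} the vertical edges $(i,n)\to(i,n+1)$ lie in $S$ exactly for $0\le i\le k$, and the horizontal edges $(i-1,n+1)\to(i,n+1)$ lie in $S$ for all $i\in B$ while being absent for $i>k+1$ with $i\in A$. A small argument using conditions (1) and (3) of \autoref{lemmaX} shows that the edge $(k,n+1)\to(k+1,n+1)$ is also absent when $A\subsetneq[m]$: if it were present, condition (1) forces $(k,n)\to(k+1,n)\in S$, and then three sides of the square with corners $(k,n),(k,n+1),(k+1,n),(k+1,n+1)$ lie in $S$, so (3) forces $(k+1,n)\to(k+1,n+1)\in S$, contradicting $k+1\notin A$. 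The remaining horizontal edges in row $n+1$, at positions $1\le i\le k$, are coupled by 3-out-of-4 to the corresponding edges in row $n$ of $T$.

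Next, condition (1) applied to each $(i-1,n+1)\to(i,n+1)\in S$ with $i\in B$ forces every horizontal edge of column $i$ of $T$ to be present. Conversely, given any $T\in\SCov(m,n)$ whose columns indexed by $B$ are filled, prescribing the top row as above yields a saturated cover $S$ of $[m]\times[n+1]$ with $c(S)=A$; checking conditions (1)--(3) at the interface between row $n+1$ and $T$ is routine. This extension is uniquely determined by $T$, so $c^{-1}(A)$ is in bijection with $\{T\in\SCov(m,n):\text{column $i$ is filled for every }i\in B\}$. When $A=[m]$, $B=\emptyset$ makes this constraint vacuous, immediately giving $|c^{-1}([m])|=s(m,n)$.

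For $A\subsetneq[m]$, the remaining task is to biject this constrained family of $T$ with $\SCov(|A|,n)$. The key geometric point is that filling column $i\in B$ couples by condition (3) the vertical edges of adjacent vertex columns $i-1$ and $i$: in each row $j$, either both or neither of $(i-1,j-1)\to(i-1,j)$ and $(i,j-1)\to(i,j)$ lies in $T$. I will then partition $\{0,1,\ldots,m\}$ into contiguous equivalence classes under the relation generated by $i-1\sim i$ for $i\in B$; the count $|B|=m-|A|$ yields $|A|+1$ classes. Collapsing each class to a single vertex column sends $T$ to a saturated cover on $[|A|]\times[n]$, and expanding each vertex column of such a cover into its class (filling all interior horizontal edges) is the two-sided inverse. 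The main technical obstacle I anticipate is verifying that these collapse and expansion operations both preserve conditions (1)--(3) of \autoref{lemmaX}, particularly condition (3): it is simultaneously the source of the vertical-edge coupling that makes the quotient well-defined and the condition most delicate to re-verify after expansion.
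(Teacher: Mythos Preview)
Your proposal is correct and follows essentially the same approach as the paper: both proofs show that the top row is determined by $A$ together with the row below, then collapse the columns indexed by $B=\{i>k+1:i\notin A\}$ (exactly the paper's ``columns indexed by all $i>k+1$ in $[m]\smallsetminus A$'') to pass from the constrained restriction $T\in\SCov(m,n)$ to $\SCov(|A|,n)$. The only cosmetic difference is that you factor the bijection through the intermediate set of constrained $T$'s, whereas the paper constructs the map $S\mapsto S'$ (remove top row and collapse) in one step.
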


\begin{proof}
 Fix $A\subsetneq [m]$. For notational convenience, let $\ell=|A|$, and let $k+1$ be the minimal element in $[m]\smallsetminus A$. We construct a bijection between $c^{-1}(A)$ and saturated covers on the $[\ell]\times [n]$ grid. 
 
 To  $S \in c^{-1}(A)$ we assign the set of cover relations $S'$ on $[\ell]\times [n]$ obtained by removing the top row and collapsing the columns indexed by all $i >k+1$ in $[m]\smallsetminus A$. As noted in \autoref{circling}, the horizontal $(i-1,n+1) \to (i ,n+1) \in S $, and thus, $(i-1,j)\to (i,j) \in S$ for all $j\in [n]$. By the 3-out-of-4 condition, it follows that the left vertical boundary of the $i$-th column is identical to its right vertical boundary; thus this collapsing operation is well-defined.  Note that we are collapsing precisely $(m+1)-|A|-1$ columns, to obtain a grid with $|A|=\ell$
columns. Moreover, every $1\times 1$ square in $S'$ was a square in $S$, and hence is saturated. This implies $S'$ is saturated.

Conversely, let $T$ be a saturated cover on $[\ell]\times [n]$. We construct a saturated cover $T^*$ on $[m]\times [n+1]$ as follows:
\begin{itemize}
\item For $i>k+1$ in $[m]\smallsetminus A$ and all $j$, include the horizontal edges $(i-1,j)\to (i,j)$.
\item Take $T$ and expand it horizontally, so that the gaps coincide precisely with the columns filled in the previous step. When expanding, the vertical edges are repeated at both ends.
\item For $i=0,\dots, k$, include the vertical edges $(i,n) \to (i,n+1)$.
\item For $i=1,\dots, k$, include the horizontal edge $(i-1,n+1) \to (i,n+1)$ if and only if $(i-1,n) \to (i,n)$ is in $T$.
\end{itemize}
By construction, $T^*$ is a saturated cover,  $c(T^*)=A$, and $(T^*)'=T$. For a saturated cover $S\in c^{-1}(A)$, the considerations in \autoref{circling} show that $(S')^*=S$, thus proving that $(~)'$ and $(~)^*$ are inverse bijections.

In the case that $A=[m]$, if $S\in c^{-1}(A)$, we have that $S$ contains all the vertical edges $(i,n) \to (i,n+1)$ for $i=0,\dots m$. Thus the horizontal edges $(i-1,n+1)\to (i,n+1)$ are determined by $(i-1,n)\to (i,n)$. Thus there is a bijection between $c^{-1}(A)$ and saturated covers on $[m]\times [n]$ obtained by removing the top row.
\end{proof}

We can now prove the recursion stated in \autoref{recursive}:
\[
                s(m,n+1) = s(m,n) + \sum_{k = 0}^{m}{{m+1 \choose k} s(k,n)}.
    \]
\begin{proof}[Proof of \autoref{recursive}]
    We partition the set of saturated covers on $[m]\times[n+1]$ according to the fibers of the function $c$. Since there are ${m+1 \choose k}$ subsets of $[m]$ of cardinality $k$, the formula above follows from \autoref{recursive-bijection}.
\end{proof}

    \subsection{Closed form}
    \label{closed_form_subsection}
        The recursion for the number of saturated transfer 
        systems on $C_{p^m q^n}$ from the previous section allows us to prove a
        closed form for $s(m,n)$. We first recall the following definition.
        
        \begin{definition}
         For $\ell,k \geq 0$, the \emph{Stirling number of the second kind} $\stirling{\ell}{k}$ counts the  number of partitions of a set with $\ell$ elements into $k$ non-empty subsets.
           \end{definition}
\begin{remark}
Stirling numbers of the second kind satisfy the recurrence
\[
  \stirling{\ell+1}{k} = k\stirling{\ell}{k}+\stirling{\ell}{k-1}
\]
for $k>0$ with
\[
  \stirling 00 = 1\qquad\text{and}\qquad \stirling n0 = \stirling 0n = 0
\]
for $n>0$. They are given by the closed formula
\[
  \stirling \ell k = \frac 1{k!}\sum_{i=0}^k (-1)^{k-i}\binom ki i^\ell.
\]
\end{remark}
        
        \begin{theorem}
        \label{closed form}
            For all $m,n\geq 0$, the sequence $s(m,n)$ satisfies
            \[
         s(m,n)=\sum_{j=2}^{m+2}(-1)^{m-j}  \stirling{m+1}{j-1}\frac{j!}{2}j^n.
            \]
        \end{theorem}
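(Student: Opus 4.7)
The plan is to prove the formula by induction on $n$, leveraging the recursion of \autoref{recursive}. Let $f(m,n)$ denote the right-hand side of the proposed formula. The induction succeeds as soon as we establish (i) the base case $f(m,0) = s(m,0)$ for all $m \geq 0$, and (ii) that $f$ satisfies the same recurrence in $n$ as $s$, namely
\[
f(m,n+1) = f(m,n) + \sum_{k=0}^m \binom{m+1}{k} f(k,n).
\]

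For (i), I first observe that the lattice $[m] \times [0]$ contains no $1\times 1$ squares, so the three-out-of-four condition of \autoref{lemmaX} is vacuous and every subset of the $m$ horizontal cover relations is a saturated cover; hence $s(m,0) = 2^m$. The companion identity $f(m,0) = 2^m$ I would then prove by a short secondary induction on $m$: applying the Stirling recurrence $\stirling{m+2}{r} = r\stirling{m+1}{r} + \stirling{m+1}{r-1}$, reindexing, and telescoping yields $f(m+1,0) = 2f(m,0)$, with base value $f(0,0)=1$.

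For (ii), I substitute the closed form into the claimed recurrence. After extending summations over $j$ (using that $\stirling{k+1}{j-1}=0$ when $j>k+2$), the linear independence of the sequences $\{j^n\}_{j=2}^{m+2}$ as functions of $n$ allows me to equate coefficients of $j^n$ on each side. After cancelling the common factor $\frac{j!}{2}(-1)^{-j}$, the task collapses to the Stirling-number identity
\[
r\stirling{M}{r} = \sum_{\ell=1}^{M} (-1)^{M-\ell} \binom{M}{\ell-1} \stirling{\ell}{r},
\]
where $M = m+1$ and $r = j-1$.

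The main obstacle is proving this identity, and I expect to handle it by substituting the explicit formula $\stirling{\ell}{r} = \frac{1}{r!}\sum_{i=0}^r (-1)^{r-i}\binom{r}{i} i^\ell$ and swapping the order of summation. The resulting inner sum $\sum_{\ell=1}^M \binom{M}{\ell-1}(-1)^{M-\ell} i^\ell$ collapses via the binomial theorem to $i\bigl[i^M - (i-1)^M\bigr]$. Re-applying the explicit Stirling formula identifies the two resulting outer sums over $i$ as $\stirling{M+1}{r}$ and $\stirling{M}{r-1}$ respectively, and a final application of the Stirling recurrence $\stirling{M+1}{r} = r\stirling{M}{r} + \stirling{M}{r-1}$ produces exactly $r\stirling{M}{r}$, completing the argument.
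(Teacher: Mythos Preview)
Your proof is correct, and its overall architecture matches the paper's: induction on $n$ via the recursion of \autoref{recursive}, reducing the inductive step to the Stirling-number identity
\[
r\stirling{M}{r}=\sum_{\ell=1}^{M}(-1)^{M-\ell}\binom{M}{\ell-1}\stirling{\ell}{r},
\]
which (after the substitutions $M=m+1$, $r=j-1$, $\ell=k+1$) is exactly the identity the paper isolates before invoking \autoref{key_lemma_closed}. The differences are local. For the base case $f(m,0)=2^m$, the paper evaluates the classical expansion $x^m=\sum_k\stirling{m+1}{k}(x-1)_{k-1}$ at $x=-2$, whereas you run a short secondary induction on $m$ using the Stirling recurrence to get $f(m+1,0)=2f(m,0)$; both are routine. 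For the key identity, the paper gives a combinatorial proof via inclusion--exclusion on ``marked partitions'' (a partition of $[\ell]$ into $r$ blocks with a distinguished non-minimal element), while you substitute the explicit formula $\stirling{\ell}{r}=\frac{1}{r!}\sum_i(-1)^{r-i}\binom{r}{i}i^\ell$, collapse the inner binomial sum to $i\bigl[i^M-(i-1)^M\bigr]$, and recognise the two resulting outer sums as $\stirling{M+1}{r}$ and $\stirling{M}{r-1}$. Your route is purely algebraic and self-contained; the paper's route is bijective and yields a combinatorial interpretation of both sides. One minor remark: you invoke linear independence of the $j^n$ to ``equate coefficients'', but you only need the easy direction---once you prove the coefficient identity for each $j$, summing over $j$ gives the recurrence---so linear independence is not actually required.
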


        \begin{proof}            We prove this by induction. We begin by proving
            the base case, when $n = 0$. It can be seen that the saturated covers on $[m]\times [0]$ correspond to $m$-long bitstrings (because each column may have 0 or 1 horizontal edges). Therefore, $s(m,0) = 2^m$. We need to prove that
            \[
             s(m,0) =2^m= \sum_{j=2}^{m+2}(-1)^{m-j} \stirling{m+1}{j-1}  \frac{j!}{2}.
            \]
            From \cite[(1.94d)]{stanley}, we know that 
            \begin{equation*}
                x^m = \sum_{k=1}^{m+1}{\stirling{m+1}{k} (x-1)_{k-1}},
            \end{equation*}
            where $(x)_{k} := x(x-1) \dots (x - (k-1))$. In particular, when $x=-2$,
            \[
                (-2)^m=\sum_{k=1}^{m+1}\stirling{m+1}{k}(-3)(-4)\dots (-k-1)=\sum_{k=1}^{m+1}(-1)^{k+1} \stirling{m+1}{k}\frac{(k+1)!}{2},\]
            from which the base case follows. 
            
        For the inductive step we fix $m,n\geq 0$, assume the statement for all $(k,n)$ with $k\leq m$, and prove the case for $(m,n+1)$.  Using the recursive formula from \autoref{recursive} and the inductive hypothesis, and changing the order of summation, we have that
        \[
        s(m,n+1)=\sum_{j=2}^{m+2}\left[ (-1)^{m-j} \stirling{m+1}{j-1}+ \sum_{k=j-2}^{m}(-1)^{k-j}\binom{m+1}{k}\stirling{k+1}{j-1}\right]\frac{j!}{2} j^n.
        \]
        
        Thus, to prove the inductive step it suffices to prove that for all $2\leq j \leq m+2$, 
        \[\stirling{m+1}{j-1} \cdot j = \stirling{m+1}{j-1}+\sum_{k=j-2}^{m}(-1)^{m-k}\binom{m+1}{k}\stirling{k+1}{j-1}.\]
        This follows directly by applying the combinatorial identity in \autoref{key_lemma_closed} below, with $\ell=m+1$ and $r=j-1$.
\end{proof}

\begin{lemma}
        \label{key_lemma_closed}
            Let $\ell, r \in \mathbb{Z}$ such that 
            $0 \leq r \leq \ell$. Then,  
            \[
                (\ell-r) \stirling{\ell}{r}=\sum_{t=1}^{\ell-r}(-1)^{t+1}{\ell \choose t+1}\stirling{\ell-t}{r}
            \]
        \end{lemma}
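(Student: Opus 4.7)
My plan is to prove the identity via exponential generating functions, exploiting the well-known EGF $f_r(x) := \sum_{\ell\ge 0}\stirling{\ell}{r}\frac{x^\ell}{\ell!} = \frac{(e^x-1)^r}{r!}$ already used (implicitly) in the exponential generating function of \autoref{exp gen}. Since $\stirling{\ell}{r}=0$ for $\ell<r$ and $\binom{\ell}{t+1}=0$ for $t+1>\ell$, both sides are the $\ell$-th coefficients (in an $x^\ell/\ell!$ series) of formal power series in $x$, and it suffices to check that these power series agree.

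First I would compute the EGF of the left-hand side. Since $xf_r'(x)=\sum_\ell \ell\stirling{\ell}{r}\frac{x^\ell}{\ell!}$ and $rf_r(x)=\sum_\ell r\stirling{\ell}{r}\frac{x^\ell}{\ell!}$, the EGF of $(\ell-r)\stirling{\ell}{r}$ is $xf_r'(x)-rf_r(x)$. Using $f_r'(x)=\frac{(e^x-1)^{r-1}e^x}{(r-1)!}$, a direct computation gives
\[
  xf_r'(x)-rf_r(x)=\frac{(e^x-1)^{r-1}}{(r-1)!}\bigl[xe^x-(e^x-1)\bigr].
\]

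Next I would compute the EGF of the right-hand side. Writing $\frac{1}{\ell!}\binom{\ell}{t+1}=\frac{1}{(t+1)!(\ell-t-1)!}$ and substituting $u=\ell-t$, the Cauchy product structure gives
\[
  \sum_{\ell\ge 0}\Biggl(\sum_{t\ge 1}(-1)^{t+1}\binom{\ell}{t+1}\stirling{\ell-t}{r}\Biggr)\frac{x^\ell}{\ell!}=\Biggl(\sum_{t\ge 1}\frac{(-1)^{t+1}x^t}{(t+1)!}\Biggr)\cdot\Biggl(\sum_{u\ge r}\frac{\stirling{u}{r}\,x^u}{(u-1)!}\Biggr).
\]
The second factor equals $xf_r'(x)$, and a quick reindex shows the first factor is $(e^{-x}-1+x)/x$. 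Multiplying these together and using $(e^{-x}-1+x)e^x=xe^x-(e^x-1)$, the right-hand EGF collapses to exactly $\frac{(e^x-1)^{r-1}}{(r-1)!}\bigl[xe^x-(e^x-1)\bigr]$, matching the left-hand EGF.

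The only step requiring care is the identification of the first factor: starting from $\sum_{t\ge 1}\frac{(-1)^{t+1}x^t}{(t+1)!}$ one rewrites it as $\frac{1}{x}\sum_{s\ge 2}\frac{(-x)^s}{s!}=\frac{e^{-x}-1+x}{x}$, and this is the main (but routine) manipulation. Since the identity is purely algebraic and both sides are polynomial in $\ell$ and $r$, the EGF comparison yields the result for all $0\le r\le\ell$, completing the proof of \autoref{key_lemma_closed}.
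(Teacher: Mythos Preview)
Your proof is correct and takes a genuinely different route from the paper's. The paper gives a bijective argument: it interprets $(\ell-r)\stirling{\ell}{r}$ as the number of \emph{marked partitions} $(\mathcal P,q)$, where $\mathcal P$ is a partition of $\{1,\dots,\ell\}$ into $r$ nonempty blocks and $q$ is any element that is not the minimum of its block, and then recovers the right-hand side by inclusion--exclusion over the sets $A_{ij}=\{(\mathcal P,j): i<j \text{ and } i,j \text{ lie in the same block}\}$; a nonempty $t$-fold intersection is specified by a choice of $t+1$ elements of $\{1,\dots,\ell\}$ and contributes $\stirling{\ell-t}{r}$. Your approach, by contrast, is purely analytic: for each fixed $r$ you match the exponential generating functions (in $\ell$) of the two sides, reducing the identity to the elementary computation $(e^{-x}-1+x)e^x = xe^x-(e^x-1)$. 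The combinatorial proof explains \emph{why} the alternating sum appears (it is literally inclusion--exclusion), while your EGF argument is shorter and entirely mechanical once the standard series $f_r(x)=(e^x-1)^r/r!$ is in hand. One small remark: your closing claim that ``both sides are polynomial in $\ell$ and $r$'' is neither true nor needed---the EGF comparison already proves the identity for every fixed $r\ge 1$ and all $\ell$, and for $r=0$ both EGFs vanish identically.
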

        
        \begin{proof}
        We prove the above identity via a combinatorial proof. To do so, we consider \emph{marked paritions}. 
    Given two nonnegative integers $\ell$ and $r$ with $r \leq \ell$ a 
    \textit{marked partition} is a pair $(\mathcal{P}, q)$ consisting
    of a partition $\mathcal{P}$ of $\{ 1, \dots, \ell \}$ into $r$ nonempty subsets
    together with a distinguished $q \in \{ 1, \cdots, \ell \}$
    such that $q$ is not the minimum of its subset.
Note that 
    the left hand side of the equation enumerates the number of marked partitions for given $\ell$ and $r$, 
as there are $\stirling{\ell}{r}$ ways to 
    choose the partition $\mathcal{P}$ and $(\ell-r)$ possibilities for 
    $q$.
    
    To prove the identity, we count the set of marked partitions using the inclusion-exclusion principle with the following subsets. For $i<j \leq \ell$, we define the subset of the marked partitions
            \[
                A_{ij} := \{ (\mathcal{P},j) \mid   
                i,j \text{ are in the same subset in } \mathcal{P}\}.
            \]
            The union of the $A_{ij}$ is the set of all marked partitions.  For $i<j$ and $k<l$, $A_{ij}\cap A_{kl}$ is empty unless $j=l$, as the elements marked will be distinct. Note moreover that for any $j\leq \ell$, and distinct $i_1,\dots, i_t < j$, 
    \[
        | A_{i_1 j} \cap \dots \cap A_{i_t j}|
        = 
        \stirling{\ell-t}{r}.
    \]
    Indeed, suppose we have $(\mathcal{P}, j) \in A_{i_1 j} \cap \dots \cap A_{i_t j}$. Then $i_1, \dots, i_t$ are in the 
    same subset as $j$ in the marked partition. Thus, the number of such partitions is equal to partitioning
    the remaining $\ell-t-1$ numbers and the subset $\{i_1,\dots,i_t,j\}$ into $r$ parts, which can be done in 
    $\stirling{\ell-t}{r}$ ways. Moreover, there are ${\ell \choose t+1}$ nonempty $t$-fold 
    intersections, because each nonempty $t$-fold intersection is uniquely determined by choice of $t+1$ numbers $i_1,\dots,i_t,j\leq \ell$. By the principle of inclusion-exclusion the above equation follows.
\end{proof}

\begin{remark}
 Notably, the closed formula for $s(m,n)$ in \autoref{closed form} is not symmetric in $m$ and $n$, although by definition we know that $s(m,n)=s(n,m)$.

 The authors conjectured \autoref{closed form} via the following process. 
 We first observed that for small cases of $m$ and $n$, we could write
 \[s(m,n)=\sum_{j=2}^{m+2} b_{mj}j^n,\]
 for some integers $b_{mj}$ that were independent of $n$. To prove that that is indeed the case, let $B_m(x)$ denote the generating function for $s(m,n)$ as a sequence in $n$.  The recursion in \autoref{recursive} implies that
 \[
  B_m(x) = \frac{2^m+x\sum_{k=0}^{m-1}\binom{m+1}{k}B_k(x)}{1-(m+2)x}.
 \]
 From this, one may prove that $B_m(x)$ is a rational function with denominator $\prod_{j=2}^{m+2}(1-jx)$, whence
 \[
  B_m(x) = \sum_{j=2}^{m+2} \frac{b_{mj}}{1-jx}
 \]
 for some $b_{mj}\in \QQ$.  It follows that
 \[
  s(m,n) = \sum_{j=2}^{m+2}b_{mj}j^n,
 \]
 but we did not succeed in producing the values of $b_{mj}$ via this method.  (The residue method for partial fractions would give the answer if we knew the numerator of $B_m(x)$.) Instead, we computed the values of $b_{mj}$ in a range and guessed that
 \[
  b_{mj} = (-1)^{m-j}\stirling{m+1}{j-1}\frac{j!}{2}
 \]
 via an act of OEIS-enabled perspicacity.
\end{remark} 
    
\subsection{Exponential generating function}    

We now consider the two-variable exponential generating function corresponding to $\{s(m,n)\}$. The content of this section is mostly due to Igor Kriz.

\begin{definition}
Let 
\[f(x,y)=\sum_{m,n\geq 0} \frac{s(m,n)}{m!n!} x^my^n\]
be the exponential generating function corresponding to $\{s(m,n)\}$.
\end{definition}

The recursive formula in \autoref{recursive} allows us to get a closed formula for $f(x,y)$.

\begin{theorem}\label{exp gen}
The exponential generating function for $\{s(m,n)\}$ satisfies
\[f(x,y)=\frac{e^{2x+2y}}{(e^x+e^y-e^{x+y})^3}.\]
\end{theorem}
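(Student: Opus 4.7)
The plan is to convert the recurrence in \autoref{recursive} into a first-order linear PDE for $f(x,y)$, and then verify by direct differentiation that the claimed formula satisfies this PDE together with the correct boundary data. Introduce the column generating functions $h_n(x) := \sum_{m\ge 0} s(m,n)\, x^m/m!$, so that $f(x,y) = \sum_{n\ge 0} h_n(x) y^n/n!$. The base case $s(m,0)=2^m$ established in the proof of \autoref{closed form} gives $h_0(x)=e^{2x}$, hence the initial condition $f(x,0)=e^{2x}$.

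The computational heart of the argument is the identity
\[
\sum_{m \geq k} \binom{m+1}{k} \frac{x^m}{m!} = \frac{x^k e^x}{k!} + \frac{x^{k-1}(e^x-1)}{(k-1)!},
\]
valid for all $k\ge 0$ with the convention $1/(-1)!=0$, which I would prove by reindexing $m=j+k-1$ and splitting $j+k = j + k$ in the resulting sum. Multiplying both sides of \autoref{recursive} by $x^m/m!$, summing over $m\ge 0$, and swapping the order of summation transforms the term $\sum_k\binom{m+1}{k}s(k,n)$ into $e^x h_n(x) + (e^x-1)h_n'(x)$. This yields the lifted recurrence $h_{n+1} = (1+e^x) h_n + (e^x-1) h_n'$, and multiplying by $y^n/n!$ and summing in $n$ produces the PDE
\[
\partial_y f = (1+e^x)\,f + (e^x-1)\,\partial_x f.
\]

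For the verification step, set $g := e^x+e^y-e^{x+y}$ and $\tilde f := e^{2x+2y}\, g^{-3}$. One checks $\tilde f(x,0) = e^{2x}/(e^x+1-e^x)^3 = e^{2x}$, and using $\partial_x g = e^x(1-e^y)$ and $\partial_y g = e^y(1-e^x)$ one computes
\[
\partial_y \tilde f - (e^x-1)\partial_x \tilde f = \tilde f\bigl[(2 - 2(e^x-1)) + 3g^{-1}(e^x-1)(e^y + e^x(1-e^y))\bigr].
\]
The bracketed expression in the last parenthesis equals $g$, so the ratio collapses to $3(e^x-1)$, and the full bracket simplifies to $1+e^x$ as required. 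Since the PDE is linear first-order with the noncharacteristic initial line $y=0$, its formal power series solution with prescribed $f(x,0)$ is unique; hence $f = \tilde f$.

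The main obstacle is the derivation of the PDE, and specifically the combinatorial identity above: recognizing that the binomial coefficient $\binom{m+1}{k}$ decomposes as $\binom{m}{k}+\binom{m}{k-1}$, which is the algebraic shadow of the splitting into the values of $h_n$ and of $h_n'$. Once the PDE is obtained, the verification is pleasantly mechanical; it is also illuminating to note that under the substitution $u=e^x-1,\, v=e^y-1$ one has $g = 1-uv$, so the proposed answer takes the rational form $(1+u)^2(1+v)^2/(1-uv)^3$, which is the shape one would conjecturally search for.
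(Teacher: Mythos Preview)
Your proof is correct and follows essentially the same approach as the paper: both convert the recurrence of \autoref{recursive} into the first-order PDE $\partial_y f = (e^x+1)f + (e^x-1)\,\partial_x f$ with initial data $f(x,0)=e^{2x}$. The only difference is in the final step---the paper writes down the general solution via characteristics and then uses the boundary conditions to identify $\phi$, whereas you directly verify that the candidate formula satisfies the PDE and invoke uniqueness of the formal power series solution; this is a slightly more elementary but equally valid route.
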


\begin{proof}
 Using standard techniques for exponential generating functions together with \autoref{recursive} shows that $f$ is a solution to the PDE
 \[\frac{\partial f}{\partial y}=(e^x+1)f+(e^x-1)\frac{\partial f}{\partial x},\]
 subject to the initial conditions
 \[f(x,0)=e^{2x} \quad \text{and} \quad f(0,y)=e^{2y}.\]
 These initial conditions follow from the fact that $s(m,0)=2^m$ and $s(0,n)=2^n$.
 The general solution is of the form
 \[f(x,y)=\frac{\phi(\frac{e^x-1}{e^{x-y}})e^x}{(e^x-1)^2},\]
with $\phi$ an arbitrary function. The initial conditions give the result.
\end{proof}
        
\section{Saturation Conjecture for $C_{pq^n}$}
\label{sec:sat_conj}

In this section we show that the saturation conjecture is true for $G=C_{pq^n}$ for all $n\geq 0$.
    
 We first recall Rubin's characterization of linear isometric transfer systems in the case of finite cyclic groups. Let $k$ be a positive integer, and let $G=C_k$ be the cyclic group of order $k$.

    \begin{definition}\label{def:modular}
  Call $I \subseteq \mathbb{Z}/k\mathbb{Z}$
    that contains $0$ and is closed under additive inverses an \emph{index set}. Given an index set $I$,  
    define the \emph{$I$-modular} transfer system $\mathscr{F}_{I}$, 
    by the following condition: given  $d \mid e \mid k$, 
    \[
       (C_d \rightarrow C_e) \in 
        \mathscr{F}_{I} \iff 
        (I\mod e) + d = (I\mod e).
    \]
    We say $I$ is an \emph{index set} for $\ff_I$.
    \end{definition}

    \begin{prop}[{\cite[Proposition 5.15]{rubin}}]\label{isometriesiffmodular}
      A $G$-transfer system is $I$-modular for some $I$ if and only if it is linear isometric. 
    \end{prop}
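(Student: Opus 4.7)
The plan is to prove both implications by constructing an explicit correspondence between $G$-universes for $G = C_k$ and index sets $I \subseteq \ZZ/k\ZZ$, and then computing the associated transfer system $\to_{\cL(U)}$ directly in terms of characters.

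First I would set up the dictionary between universes and index sets. Every irreducible real representation of $C_k$ corresponds either to the trivial character $0$, to the sign character when $k$ is even, or to an unordered pair $\{a,-a\} \subseteq \ZZ/k\ZZ$ with $a\ne -a$. Since a $G$-universe $U$ is (up to isomorphism) determined by which irreducibles appear in it infinitely often, and must contain the trivial representation, $U$ is encoded by an index set $I \subseteq \ZZ/k\ZZ$ containing $0$ and closed under negation. Conversely every such $I$ yields a universe $U_I$. This gives a bijection between index sets and $G$-universes up to the equivalence that does not change the linear isometries operad.

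Next I would compute $\to_{\cL(U_I)}$ in character-theoretic terms. The key input is the standard Blumberg--Hill absorption criterion: for $C_d \le C_e \le C_k$, the transfer $C_d \to C_e$ is admitted by $\cL(U)$ if and only if $U|_{C_e}$ contains the induced representation $\mathrm{Ind}_{C_d}^{C_e}(V)$ whenever $U|_{C_d}$ contains $V$. Under restriction, an irreducible with character $\alpha \in \ZZ/k\ZZ$ becomes the irreducible with character $\alpha \bmod e$ (when viewed as a $C_e$-rep), so the character set of $U_I|_{C_e}$ is $I \bmod e$. Under induction from $C_d$ to $C_e$, the character $\beta \in \ZZ/d\ZZ$ goes to the sum over all lifts $\{\beta, \beta+d, \beta+2d,\dots\} \subseteq \ZZ/e\ZZ$. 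Translating the absorption criterion, this exactly says
\[
 (I \bmod e) + d \subseteq (I \bmod e),
\]
and since addition by $d$ is a bijection of $\ZZ/e\ZZ$, this is equivalent to $(I \bmod e) + d = (I \bmod e)$, which is the defining condition of $\ff_I$.

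With this computation in hand both directions follow immediately. If $\ff = \ff_I$ is $I$-modular, then $\to_{\cL(U_I)} = \ff_I = \ff$, so $\ff$ is linear isometric. Conversely if $\ff = \to_{\cL(U)}$ is linear isometric, let $I \subseteq \ZZ/k\ZZ$ be the set of characters of the irreducibles appearing in $U$; this is automatically an index set, and the computation shows $\ff = \to_{\cL(U)} = \ff_I$.

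The main obstacle is a careful verification of the absorption criterion for $\cL(U)$ in the cyclic setting, particularly being precise about real versus complex characters (hence the closure under negation in the definition of an index set) and about the fact that each irreducible need only appear infinitely often in $U$, not with any prescribed multiplicity, so that the universe truly is recoverable from $I$ up to linear-isometries-equivalence. Once that dictionary is established, the equivalence with the modular condition is a direct character-level calculation.
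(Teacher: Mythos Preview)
Your proposal is correct and follows essentially the same approach the paper sketches: first establish the correspondence between $G$-universes and index sets via the decomposition of a universe into rotation representations indexed by $I$, then translate the Blumberg--Hill characterization of $\to_{\cL(U)}$ (the paper cites \cite[Theorem 4.18]{BH}) into the modular condition $(I\bmod e)+d = (I\bmod e)$. The paper does not give a self-contained proof here---it cites \cite[Proposition 5.15]{rubin} and only outlines these two steps---so your write-up is in fact more detailed than what appears in the paper, but the strategy is the same.
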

    
    The proof is done in two main steps. First, every $G$-universe can be expressed as the direct sum of infinitely many copies of the 2-dimensional representations given by rotation by $2\pi m/k$ for $m \in I$, for some set $I$ that contains 0 is closed under additive inverses. Second, one translates the general characterization of the transfer system associated to a linear isometries operad of \cite[Theorem 4.18]{BH} in terms of this specific decomposition.
    
    \autoref{isometriesiffmodular} implies that to verify the saturation conjecture it is sufficient to build an index set $I$ for each saturated transfer system. Note that the following proposition follows directly from definitions and will allow us to recursively construct index sets via the $\ell=pq^n$, $k=pq^{n+1}$ case.
    
    \begin{prop}\label{easy}
    Let $\ell \mid k$, and suppose that $J$ is an index set for $C_k$. Let $I=(J \mod \ell)$. Then $I$ is an index set for $C_\ell$, and 
    \[\ff_I=(\ff_J)|_{C_\ell}.\]
    \hfill\qedsymbol
    \end{prop}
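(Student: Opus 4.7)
The plan is to verify the two assertions directly from the definitions, since each reduces to an elementary fact about iterated modular reduction.

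First, to see that $I=(J\bmod \ell)$ is an index set for $C_\ell$, I would check that $0\in I$ and that $I$ is closed under additive inverses. Both are immediate from the corresponding properties of $J$: we have $0=0\bmod\ell\in I$, and if $x\in I$ is represented by $j\in J$, then $-j\in J$ so $-x=(-j)\bmod\ell\in I$, using that reduction modulo $\ell$ commutes with negation in $\mathbb Z/k\mathbb Z$.

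Second, to establish the equality $\ff_I=(\ff_J)|_{C_\ell}$, I would fix a cover $C_d\to C_e$ with $d\mid e\mid\ell$ and compare the defining conditions. By \autoref{def:modular}, the relation lies in $\ff_I$ iff $(I\bmod e)+d=(I\bmod e)$, and it lies in $(\ff_J)|_{C_\ell}$ iff $(J\bmod e)+d=(J\bmod e)$. The key observation is that since $e\mid \ell\mid k$, iterated reduction collapses: for any $a\in\mathbb Z/k\mathbb Z$, $(a\bmod\ell)\bmod e=a\bmod e$. Applying this elementwise gives
\[
 I\bmod e \;=\; (J\bmod \ell)\bmod e \;=\; J\bmod e,
\]
so the two modular conditions are literally identical, yielding the desired equality of transfer systems.

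There is no real obstacle here; the argument is essentially bookkeeping, and the only point worth stating explicitly is the compatibility of iterated modular reduction, which is precisely what makes the definition of $I$-modular transfer systems behave well under restriction along quotients $C_k\twoheadrightarrow C_\ell$.
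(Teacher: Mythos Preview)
Your proposal is correct and matches the paper's approach exactly: the paper records no proof beyond ``follows directly from definitions,'' and what you have written is precisely the routine unpacking of those definitions via the compatibility of iterated reduction $(a\bmod\ell)\bmod e = a\bmod e$ for $e\mid\ell\mid k$.
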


    Our task now is to  
    recursively generate index sets for 
    saturated transfer systems on $C_{pq^{n+1}}$ from index sets for saturated 
    transfer systems on $C_{pq^n}$. We use the language of saturated covers from \autoref{saturated cover}, and follow the strategy of \autoref{sec:recursive} to split $\SCov(1,n+1)$ into four equivalence classes, based on the fibers of the map $c$ of \autoref{cons:circling}.
    
The four equivalence classes have the following representative top rows. For a saturated cover $S$, the circles denote the vertices in $c(S)$, the edges in solid black are the edges that must be in $S$, and the edges in dashed red are the edges that $S$ doesn't contain. The remaining edges may or may not be in $S$.
        \begin{center}
        \begin{tikzpicture}
        \node at (0,0) {$\bullet$};
        \node at (0,1) {$\bullet$};
        \node at (1,0) {$\bullet$};
        \node at (1,1) {$\bullet$};
        \draw (0,0) -- (1,0);
        \draw (0,1) -- (1,1);
        \draw [red,dashed] (0,0) -- (0,1);
        \draw [red,dashed] (1,0) -- (1,1);
        \draw (-.3,-.3) rectangle (1.3,1.3);
        \node at (0.5,-.5) {I};
    \end{tikzpicture}\qquad
       \begin{tikzpicture}
        \node at (0,0) {$\bullet$};
        \node at (0,1) {$\bullet$};
        \node at (0,1) {$\odot$};
        \node at (1,0) {$\bullet$};
        \node at (1,1) {$\bullet$};
        \node at (1,1) {$\odot$};
        \draw (0,0) -- (0,1);
        \draw (1,0) -- (1,1);
        \draw (-.3,-.3) rectangle (1.3,1.3);
        \node at (0.5,-.5) {II};
        \end{tikzpicture}\qquad
        \begin{tikzpicture}
        \node at (0,0) {$\bullet$};
        \node at (0,1) {$\bullet$};
        \node at (0,1) {$\odot$};
        \node at (1,0) {$\bullet$};
        \node at (1,1) {$\bullet$};
        \draw (0,0) -- (0,1);
        \draw [red,dashed] (0,1) -- (1,1);
        \draw [red,dashed] (1,0) -- (1,1);
        \draw (-.3,-.3) rectangle (1.3,1.3);
        \node at (0.5,-.5) {III};
    \end{tikzpicture}\qquad
    \begin{tikzpicture}
        \node at (0,0) {$\bullet$};
        \node at (0,1) {$\bullet$};
        \node at (1,0) {$\bullet$};
        \node at (1,1) {$\bullet$};
        \node at (1,1) {$\odot$};
        \draw [red,dashed] (0,0) -- (0,1);
        \draw [red,dashed] (1,0) -- (1,1);
        \draw [red,dashed] (0,1) -- (1,1);
        \draw (-.3,-.3) rectangle (1.3,1.3);
        \node at (0.5,-.5) {IV};
    \end{tikzpicture}

    \end{center}
    As explained in \autoref{sec:recursive},  within class I, a saturated cover is determined by its restriction to $[0]\times [n]$, while for the other three it is determined by its restriction to $[1]\times [n]$. 

In order for the inductive step to work in all cases, we need to prove the following stronger statement.

\begin{theorem}\label{Cpqn}
Suppose $p,q$ are primes greater than 3 and $n\geq 0$, and let $\ff$ be a saturated transfer system on $C_{pq^n}$. Then there exists an index set $I\subseteq \ZZ/pq^n\ZZ$  such that $\ff=\ff_I$ and $I$ contains a nonzero multiple of $q^n$.
\end{theorem}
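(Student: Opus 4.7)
I would prove this by induction on $n$. The base case $n = 0$ concerns $C_p$, whose only saturated transfer systems are the trivial one and the complete one. The trivial transfer system is realized by $I = \{0, \pm 1\}$, which contains the nonzero element $1$ (a nonzero multiple of $q^0 = 1$) and, because $p > 3$, is not invariant under $+1$. The complete transfer system is realized by $I = \ZZ/p\ZZ$.

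For the inductive step, suppose the claim holds for $n$, and let $\ff$ be a saturated transfer system on $C_{pq^{n+1}}$. Its restriction $\ff|_{C_{pq^n}}$ is a saturated transfer system on $C_{pq^n}$, so by induction there is an index set $I' \subseteq \ZZ/pq^n\ZZ$ with $\ff_{I'} = \ff|_{C_{pq^n}}$ and containing some $\beta q^n$ with $\gcd(\beta, p) = 1$. I would seek $I \subseteq \ZZ/pq^{n+1}\ZZ$ with (i) $I \bmod pq^n = I'$, so that \autoref{easy} forces $\ff_I|_{C_{pq^n}} = \ff|_{C_{pq^n}}$, (ii) $\ff_I = \ff$, and (iii) $I$ contains a nonzero multiple of $q^{n+1}$. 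A natural candidate for (iii) is $\alpha q^{n+1}$ where $\alpha q \equiv \beta \pmod{p}$; such $\alpha$ exists and is nonzero mod $p$ since $\gcd(p,q) = 1$, and $\alpha q^{n+1}$ reduces to $\beta q^n$ modulo $pq^n$, consistent with $I'$.

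Only three new modular conditions remain, corresponding to the three edges in the top row of the $[1] \times [n+1]$ grid: the left vertical ($I \bmod q^{n+1}$ is $+q^n$-invariant), the right vertical ($I \bmod pq^{n+1}$ is $+pq^n$-invariant), and the top horizontal ($I$ is $+q^{n+1}$-invariant). I would partition $\SCov(1, n+1)$ into the four classes I--IV of \autoref{cons:circling} and handle each separately. In Class II (both verticals present), take $I = \pi^{-1}(I') \cup \{\pm \alpha q^{n+1}\}$, where $\pi \colon \ZZ/pq^{n+1}\ZZ \to \ZZ/pq^n\ZZ$ is the natural reduction: the preimage structure automatically enforces $+pq^n$-invariance, and the top horizontal status follows from that of the bottom horizontal via the 3-out-of-4 axiom. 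For Classes I, III, and IV (right vertical absent), I would use a more restrictive lift of $I'$---for instance, representing each element of $I'$ by its unique lift in a fundamental domain symmetric about $0$---then augment by $\{\pm \alpha q^{n+1}\}$ together with additional elements tailored to realize the top horizontal precisely when required.

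The main technical obstacle is verifying, in each of the nine valid configurations of top-row edges (four classes, further split by whether the bottom horizontal is in $\ff$), that the constructed $I$ simultaneously realizes exactly the intended edges of $\ff$ at the top row without disturbing the modular conditions already encoded by $I'$ at lower rows. The hypothesis $p, q > 3$ is invoked to avoid accidental invariances in small residue systems (analogous to the $\{0, \pm 1\}$ example in the base case), and the 3-out-of-4 axiom reduces the number of configurations from sixteen to a manageable nine.
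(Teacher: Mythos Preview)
Your inductive framework, the four-class partition by top row, and your Class~II construction (the full preimage $\pi^{-1}(I')$) all match the paper. The gap is in Classes~I, III, and IV, which is where essentially all the content lies, and your sketch does not supply workable constructions there.

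For Class~I you need $I+q^{n+1}=I$, so $I$ must be a union of cosets of the order-$p$ subgroup $\langle q^{n+1}\rangle\le\ZZ/pq^{n+1}\ZZ$; a one-element-per-fiber lift of $I'$ to a symmetric fundamental domain is essentially never of this form, and ``additional elements tailored to realize the top horizontal'' would have to rebuild $I$ from scratch while preserving $I\bmod pq^n=I'$. The paper avoids this difficulty by treating Type~I \emph{non-inductively}: it restricts $\ff$ to $C_{q^n}$ rather than to $C_{pq^n}$, invokes Rubin's theorem for cyclic $q$-groups to get an index set in $\ZZ/q^n\ZZ$, and pulls it back along $\ZZ/pq^n\ZZ\to\ZZ/q^n\ZZ$, which is automatically $+q^n$-invariant. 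For Classes~III and~IV the paper's constructions are specific and delicate, not ``symmetric fundamental domain plus augmentation'': Type~III starts from the full preimage and \emph{removes} the two elements $\pm aq^n$ (with a separate sub-case when $a=q$ to keep a nonzero multiple of $q^{n+1}$ in $J$), while Type~IV requires a dedicated arithmetic lemma (\autoref{alpha}) to choose, for each $i\in I'$, a lift $\alpha_i pq^n+i$ whose residue mod $q^{n+1}$ lands in $[0,q^n)$, so that $J\bmod q^{n+1}$ is concentrated near $0$ and both the left vertical and the top horizontal are broken. It is in these verifications that the hypothesis $p,q>3$ actually does work. (A minor point: there are $7$ valid top-square configurations, not $9$; Type~I forces the bottom horizontal by restriction, and in Type~II the top and bottom horizontals are linked by the $3$-out-of-$4$ condition.)
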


\begin{proof}
 We first prove the statement for type I directly, and we then prove the cases for types II, III and IV by induction. The strategy for all cases is to take a saturated transfer system $\ff$, restrict it to a certain subgroup, take an index set for the restriction, and construct an index set for $\ff$ based on the index set for the restriction. We will write $(i,j)$ for the subgroup $C_{p^iq^j}\le C_{pq^n}$ when it is convenient.
 
 \textbf{Type I:} Let $\ff$ be a saturated transfer system on $C_{pq^n}$ of type I, and consider its restriction $\ff \mid _{C_{q^n}}$. By \cite[Theorem 5.18]{rubin}, this saturated transfer system on $C_{q^{n}}$ is induced by an index set $I\subseteq \ZZ/{q^n}\ZZ$.  Set
\[
  J := \{\alpha q^{n}+i\mid 0\le \alpha<p,~i\in I\}.
\]
By construction we have that $J\mod{q^n} = I$, so \autoref{easy} implies that the restrictions of $\ff_J$ and $\ff$ to $C_{q^n}$ coincide. By construction again we have that $J+q^n=J$, thus showing that $(0,n) \to (1,n) \in \ff_J$. By the conditions on saturation these two facts imply that $\ff_J=\ff$. Moreover, taking $\alpha=1$ shows that $q^n \in J$.

For the remainder of the proof, we proceed by induction. For the base case, when $n=0$, we are considering the two saturated systems for $C_p$: the trivial and the complete one (which is of type I). The trivial one is induced by $I=\{0,1,p-1\}$ as long as $p>3$ and the complete one is induced by $I=\{0,1,\dots,p-1\}$. Note that the latter index set is the one we obtain from the direct proof above. 

Our inductive hypothesis is that the statement of the theorem holds for some $n\geq 0$. Let $\ff$ be a saturated transfer system of type II, III or IV on $C_{pq^{n+1}}$, and consider its restriction to $C_{pq^n}$. The inductive hypothesis implies that there exists and index set $I\subseteq \ZZ/pq^n\ZZ$ containing $aq^n$ for some $0<a<p$, and such that
\[ \ff_I = \ff \mid_{C_{pq^n}}.\]
In all three cases, we will produce an index set $J\subseteq \ZZ/pq^{n+1}\ZZ$ such that $J \mod{pq^n}=I$, so that by \autoref{easy} we get that $\ff_J$ and $\ff$ coincide in the restriction to $C_{pq^n}$. We will then show that $J$ does the right thing in the top square (depending on the type) and contains a nonzero multiple of $q^{n+1}$.

\textbf{Type II:} Suppose $\ff$ has type II and take $I\subseteq \ZZ/pq^n\ZZ$ as described above. Set
\[
  J := \{\alpha pq^n+i\mid 0\le \alpha<q,~i\in I\}.
\]
In a manner similar to the type I argument, the reader may verify that $\ff_J=\ff$. To find a nonzero multiple of $q^{n+1}$ in $J$, take $\alpha$ with residue class $-ap^{-1}\in \ZZ/q\ZZ^\times$ and $i=aq^n$.

\textbf{Type III:} Suppose $\ff$ has type III and take $I\subseteq \ZZ/pq^n\ZZ$ as described above. We need to produce an index set $J\subseteq \ZZ/pq^{n+1}\ZZ$ such that $J\mod{pq^n} = I$, some $0\ne bq^{n+1}\in J$, and --- since $\ff$ has type III --- such that $(1,n)\to (1,n+1)\not\in \ff_J$ and $(0,n)\to (0,n+1)\in \ff_J$. By the saturation axioms, these conditions are enough to ensure $\ff_J=\ff$.

Set
\[
  J' := \{\alpha pq^n+i\mid 0\le \alpha <q,~i\in I\}
\]
and
\[
  J := J'\smallsetminus \{aq^n,pq^{n+1}-aq^n\}.
\]
Then $J$ is an index set and it is clear that $J\mod{pq^n}\subseteq I$ with only $aq^n$ and $-aq^n$ possibly in the set difference. For $q>2$ and $\alpha = 1$, the element $pq^n+aq^n$ is in $J$ and reduces to $aq^n\mod{pq^n}$. Similarly, the mod $pq^{n+1}$ negative of this element is in $J$ and reduces to $-aq^n\mod{pq^n}$.

We now check that $(0,n)\to (0,n+1)\in \ff_J$, which amounts to $(J\mod q^{n+1})+q^n = J\mod q^{n+1}$. We claim that $J\mod{q^{n+1}} = J'\mod{q^{n+1}}$, which suffices for this result.  To verify the claim, take $\alpha$ that reduces to $ap^{-1}\mod q$. Then
\[
  aq^n\equiv \alpha pq^n\mod{q^{n+1}},
\]
so $aq^n \in (J \mod{q^{n+1}})$ as needed.

To show that $(1,n)\to (1,n+1)\not\in \ff_J$, we must verify that
\[
  J+pq^n \ne J.
\]
Note that $aq^n\not\in J$, but $(q-1)pq^n+aq^n\in J$, so $aq^n\in J+pq^n$.

Finally, we need to show that $J$ contains some nonzero multiple $bq^{n+1}$ of $q^{n+1}$.  If $a\ne q$, then $J$ contains
\[
  \alpha pq^n+aq^n = (\alpha p+a)q^n
\]
which is divisible by $q^{n+1}$ for some $0<\alpha<q$.  If $a=q$ (which is possible when $q<p$), then we actually must modify the definition of $J$, setting
\[
  J := J' \smallsetminus \{aq^n+pq^n,pq^{n+1}-(aq^n+pq^n)\}.
\]
The above argument still goes through and we can then check that some $0\ne bq^{n+1}\in J$. 

\textbf{Type IV:} Suppose $\ff$ has type IV and take $I\subseteq \ZZ/pq^n\ZZ$ as described above. We construct an index set $J\subseteq \ZZ/pq^{n+1}\ZZ$ containing a nonzero multiple of $q^{n+1}$ such that $J \mod{pq^n} = I$, $(0,n)\to (0,n+1)\not\in \ff_J$ and $(0,n+1)\to (1,n+1)\not\in \ff_J$. The fact that $\ff$ is of type IV and the saturation axioms imply then that $\ff_J=\ff$.

Let $i\in I\smallsetminus 0$. Then by \autoref{alpha} (stated and proved immediately after this proof), there exists $0\leq\alpha_i<q$ such that $\alpha_i pq^n +i \mod{q^{n+1}}$ lies in the interval $[0,q^n)$. By Sunzi's theorem, there exists $0<c<pq$ such that $c$ is a multiple of $q$ and $c\equiv a \mod{p}$. We set
\[J := \{0, cq^n,pq^{n+1}-cq^n\} \cup \left\{ \alpha_i pq^n +i, pq^{n+1}-(\alpha_i pq^n +i) \mid i \in I \smallsetminus \{0,aq^n,pq^n-aq^n\} \right\}.\]
Since $c$ is a nonzero multiple of $q$, we know $cq^n$ is a nonzero multiple of $q^{n+1}$. The other condition on $c$ implies $cq^n\equiv aq^n\mod{pq^n}$, and we thus have that $J \mod{pq^n}=I$. 

To prove that $(0,n)\to (0,n+1)\not\in \ff_J$ we need to check that
\[ (J \mod{q^{n+1}}) + q^n \neq (J \mod{q^{n+1}}).\]
We have that
 \[(J \mod{q^{n+1}}) =\{0\} \cup \{ \alpha_i pq^n +i, q^{n+1}-(\alpha_i pq^n +i) \mid i \in I \smallsetminus \{0,aq^n,pq^n-aq^n\} \}.\]
When considering this set in terms of representatives $\{0,1,\dots,q^{n+1}-1\}$, its elements are concentrated in the intervals $[0,q^n-1]$ and $[q^{n+1}-q^n+1,q^{n+1}-1]$. Basic arithmetic shows that if $q>2$, the translation by $q^n$ of any element in the first interval does not land in either of the two intervals, showing our result.

A similar argument using that $p,q>3$ can be used to prove that $J+q^{n+1} \neq J$, thus showing that $(0,n+1)\to (1,n+1)\not\in \ff_J$, as needed. This finishes the proof.
\end{proof}

\begin{lemma}\label{alpha}
Let $i$ be an integer such that $0<i<pq^{n}$. Then there exists $0\leq\alpha <q$ such that the residue of $\alpha pq^n +i \mod{q^{n+1}}$ lies in the interval $[0,q^n)$.
\end{lemma}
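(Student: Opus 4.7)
The plan is to use the division algorithm to split $i$ into a ``top digit'' of size $q^n$ and a remainder, then choose $\alpha$ to kill the top digit modulo $q^{n+1}$. Concretely, write $i = \beta q^n + r$ with $0 \le r < q^n$ by the division algorithm. Since $0 < i < pq^n$, this forces $0 \le \beta < p$. Then
\[
\alpha pq^n + i = (\alpha p + \beta)q^n + r,
\]
and since $r < q^n \le q^{n+1}$, reducing modulo $q^{n+1}$ gives
\[
\alpha pq^n + i \equiv \bigl((\alpha p + \beta) \bmod q\bigr)\, q^n + r \pmod{q^{n+1}}.
\]

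The right-hand side lies in $[0, q^n)$ if and only if the coefficient of $q^n$ vanishes, i.e., $\alpha p + \beta \equiv 0 \pmod{q}$. Because $p$ and $q$ are distinct primes, $p$ is a unit in $\ZZ/q\ZZ$, so I would choose $\alpha$ to be the unique representative in $\{0, 1, \ldots, q-1\}$ congruent to $-\beta p^{-1}$ modulo $q$. This value of $\alpha$ then yields the desired residue $r \in [0, q^n)$, completing the argument.

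There is no substantive obstacle here: the proof is a short application of the division algorithm together with invertibility of $p$ modulo $q$. The only mild check is ensuring that the reduction modulo $q^{n+1}$ really does give the representative $r$ rather than something larger, but this is immediate from $r < q^n$ once the coefficient of $q^n$ has been made zero.
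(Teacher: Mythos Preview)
Your proof is correct and follows essentially the same approach as the paper: both write $i = kq^n + r$ via the division algorithm and then use the invertibility of $p$ modulo $q$ (the paper phrases this via B\'{e}zout's identity, you via $p^{-1}\in(\ZZ/q\ZZ)^\times$) to choose $\alpha$ killing the coefficient of $q^n$ modulo $q^{n+1}$.
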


\begin{proof}
 Let $r$ be the residue class of $i \mod{q^n}$. Thus, $0\leq r < q^n$, and there exists $0\leq k <p$ such that $i=kq^n+r$. By B\'{e}zout's identity, there exist $c,d\in \ZZ$ such that $cp+dq=1$. Let $\beta = -ck \in \ZZ$. Then basic arithmetic shows that 
 \[\beta pq^n + i \equiv r \mod{q^{n+1}}.\]
 Finally, letting $\alpha$ be the residue of $\beta \mod{q}$ achieves the result.
\end{proof}

\bibliography{mybibliography}
\bibliographystyle{alpha}
\end{document}